\numberwithin{equation}{section}
\def\cb{{\mathcal B}}
\def\cf{{\mathcal F}}
\def\ch{{\mathcal H}}
\def\cs{{\mathcal S}}
\def\ga{{\mathfrak A}}
\def\gar{{\mathfrak R}}
\def\bc{{\mathbb C}}
\def\bn{{\mathbb N}}
\def\br{{\mathbb R}}
\def\bz{{\mathbb Z}}
\def\a{\alpha}
\def\b{\beta}
\def\l{\lambda} 
\def\m{\mu}
\def\s{\sigma} 
\def\t{\tau}
\def\om{\omega} \def\Om{\Omega}
\newtheorem{thm}{Theorem}[section]
\newtheorem{example}[thm]{Example}
\newtheorem{lem}[thm]{Lemma}
\newtheorem{cor}[thm]{Corollary}
\newtheorem{prop}[thm]{Proposition}
\theoremstyle{definition}
\newtheorem{rem}[thm]{Remark}
\begin{document}
\title[On truncated $t$-free Fock spaces]
{On truncated $t$-free Fock spaces: spectrum of position operators and shift-invariant states}
\author{Vitonofrio Crismale}
\address{Vitonofrio Crismale\\
Dipartimento di Matematica\\
Universit\`{a} degli studi di Bari\\
Via E. Orabona, 4, 70125 Bari, Italy}
\email{\texttt{vitonofrio.crismale@uniba.it}}
\author{Simone Del Vecchio}
\address{Simone Del Vecchio\\
Dipartimento di Matematica\\
Universit\`{a} degli studi di Bari\\
Via E. Orabona, 4, 70125 Bari, Italy} \email{{\tt
simone.delvecchio@uniba.it}}
\author{Stefano Rossi}
\address{Stefano Rossi\\
Dipartimento di Matematica\\
Universit\`{a} degli studi di Bari\\
Via E. Orabona, 4, 70125 Bari, Italy}
\email{\texttt{stefano.rossi@uniba.it}}
\begin{abstract}
The ergodic properties of the shift on  both full and $m$-truncated $t$-free  $C^*$-algebras are analyzed. In particular, the shift is shown to be uniquely ergodic with respect to the fixed-point algebra.
 In addition, for every $m\geq 1$, the invariant states of the shift acting on the $m$-truncated $t$-free  $C^*$-algebra
are shown to yield a $m+1$-dimensional Choquet simplex, which collapses to a segment in the full case.
Finally, the spectrum of the position operators on  the $m$-truncated $t$-free Fock space is also determined.

\vskip0.1cm\noindent \\
{\bf Mathematics Subject Classification}: 46L55, 46L53, 60G20. \\
{\bf Key words}: $t$-free Fock space, $C^*$-dynamical systems, Unique ergodicity
\end{abstract}

\maketitle
\section{Introduction}

A  class of non-commutative $C^*$-dynamical systems  arise from concrete models coming from
Quantum Probability.  The shift automorphism certainly represents an interesting example, not least because it is closely related to
stationary stochastic processes. What is more relevant to non-commutative ergodic theory  is that
the shift displays different ergodic properties
depending on the 
$C^*$-algebra it acts on.  Little can be said when it acts on the CAR algebra since its shift-invariant states are probably too many to
be characterized in any useful way, see \cite{CFCMP,CrFid, CR}.
On both Boolean and monotone $C^*$-algebras, however,  the shift only features a segment of invariant states, whose extreme points are the vacuum
and the state at infinity, \cite{CFL}. Lastly,  on the $C^*$-algebra
generated by $q$-deformed commutation relations the shift is even better-behaved in that
it acts as a uniquely ergodic automorphism whenever $|q|<1$,  \cite{DyF}.\\
In the present work we show that suitable choices of the $C^*$-algebra that we make the shift act on  
give rise to new ergodic properties, which are in a sense intermediate between those we recollected above.
Since the third example includes in particular the free case, which corresponds to $q=0$, it  is reasonable to
analyze the ergodic properties of the shift when it acts on the $C^*$-algebra generated by
the creators on truncated  free Fock spaces. These spaces are not entirely new in the context of Quantum Probability. In fact,  they
have been  studied by Franz and Lenczewski in \cite{FL}, where they are referred to as the $m$-free Fock spaces, as a means to  obtain what they call a
hierarchy of freeness along with the corresponding central limit theorems.
The spaces we are actually concerned with in this paper are slightly more general than those in \cite{FL} because
they are obtained by truncating the $t$-free Fock space, which are at the same time a generalization of the full Fock space and a particular yet notable case of so-called one-mode type interacting
Fock space, \cite{ACL}. These were defined by Bo\.{z}ejko and Wysoczanski in \cite{BW} to provide a concrete  model 
where the distribution in the vacuum of the position operators  was exactly that of the central limit of the 
$t$-free convolution.  This operation between measures had in turn been introduced  in \cite{BW0} by the same authors
following the influential work of Bo\.{z}ejko, Leinert and Speicher \cite{BLS} on  conditionally free independence.

Let us now move on to present the results of our paper. After defining the truncated $t$-free Fock spaces for each natural $m\geq 1$, we first focus on the spectrum of the corresponding position operators. This turns out to be a finite set strictly larger than the support of the spectral measure associated with the vacuum vector, meaning that the latter fails to be separating for the $C^*$-algebra generated by a single position operator.
In addition, in Proposition \ref{finsupp} we show that the spectral measure is a convex combination of $m+1$ Dirac measures at the zeros of explicitly given polynomials whose weights are also given.\\
We then analyze the ergodic properties of the action of the shift on  the $C^*$-algebras generated by the creators on both the truncated $t$-free Fock spaces and on the full $t$-free Fock space. The corresponding dynamical systems are uniquely ergodic with respect to the fixed-point algebra, a notion introduced in \cite{AD} and further studied in \cite{DFR1, DFR2, F}. Furthermore, for $m\geq 1$, the invariant states on  the $C^*$-algebra relative to the $t$-free Fock space yield a $m+1$-dimensional Choquet simplex which collapses to a segment\footnote{Actually a point when $t=1$.} when any number of particles is allowed, Propositions \ref{simplex} and \ref{fullue}. Finally, in analogy with what happens in the Boolean and monotone cases, we show that the $C^*$-algebra generated by position operators agrees with the $C^*$-algebra generated by creation and annihilator operators and that it acts irreducibly on the truncated $t$-free Fock space.

\label{sec1}
\section{Preliminaries}\label{prel}

Let $\ch$ be a fixed Hilbert space.
For any positive real number $t$ the $t$-free Fock Hilbert space $\cf_t(\ch)$, is the completion of the algebraic direct sum
$$\bc\Om\oplus\ch\oplus\ch^{\otimes^2}\oplus\cdots\oplus\ch^{\otimes^n}\oplus\cdots$$
with respect to the inner product
$$\langle f_1\otimes\cdots \otimes f_n, g_1\otimes\cdots \otimes g_n \rangle=\delta_{n, m}t^{n-1}\prod_{i=1}^n \langle f_i, g_i \rangle$$
for every $f_1, \ldots, f_n$ and $g_1, \ldots, g_m$ in $\ch$ and $n, m\geq 1$, where $\Om$ is the vacuum vector.
We would like to mention that the choice $t=1$ returns the free Fock space, and that this Hilbert space can also be viewed as a one-mode type interacting
Fock space with $\lambda_n= t^{n-1}$; for details the reader is referred to \cite{AB, ACL}.
In the sequel, we will always take $\ch=\ell^2(\bz)$ with canonical orthonormal basis $\{e_i: i\in\bz\}$.
Creators and annihilators on the $t$-free Fock Hilbert space are defined as follows. For each $k\in\bn$ and  $i,j_1,\ldots, j_k \in \bz$
\begin{align}
\begin{split}\label{creator}
&a^\dag_i\Om:= e_i\\
&a^\dag_i  e_{j_1}\otimes \cdots \otimes e_{j_k}:=e_i\otimes e_{j_1}\otimes \cdots \otimes e_{j_k}
\end{split}
\end{align}

\begin{align}
\begin{split}\label{ann}
&a_i\Om:= 0\,, \\
&a_ie_{j_1}\otimes \cdots \otimes e_{j_k}:=\left \{
\begin {array} {ll}
\delta_{i, j_1}\Om & \text{if $k=1$}\\
t \delta_{i, j_1}e_{j_2}\otimes \cdots \otimes e_{j_k}\ &\text{if $k\geq 2$}\\
\end{array}
\right.
\end{split}
\end{align}

For each $i\in\bz$ the position operators $x_i$ are the self-adjoint part of the annihilators, namley $x_i=a_i+a^\dag_i$.
We denote by $\ga_t\subset\cb(\cf_t(\ch))$ the unital  $C^*$-algebra generated by all creators.\\

For any fixed  $m\geq 1$ we introduce what we call the $m$-truncated $t$-free Fock space as 
$$\cf_t^{(m)}(\ch):=\bc\Om\oplus\ch\oplus\ch^{\otimes^2}\oplus\cdots\oplus\ch^{\otimes^m}\subset \cf_t(\ch)\,.$$
We denote by $P_m$ the orthogonal projection of $\cf_t(\ch)$ onto $\cf_t^{(m)}(\ch)$.
It is worth noting that, for each $m\geq 1$, the Hilbert space $\cf_t^{(m)}(\ch)$ is again a
one-mode interacting Fock space with $\lambda_0=1$, $\lambda_n=t^{n-1}$ for $n=1, 2, \ldots, m$
and $\lambda_n=0$ for all $n\geq m+1$.\\
The corresponding $m$-truncated annihilation and position operators are defined as  $a^{(m)}_i:=P_ma_iP_m$ and $x^{(m)}_i:=P_ma_iP_m$. We also consider the $C^*$-algebras generated by the truncated operators defined above:
$$\ga^{(m)}_t:=C^*(\{a^{(m)}_i: i\in \bz\})\subset\cb(\cf_t^{(m)}(\ch))\,,$$
$$\gar^{(m)}_t:=C^*(\{x^{(m)}_i: i\in \bz\})\subset\cb(\cf_t^{(m)}(\ch))\,.$$
These $C^*$-algebras  will be every so often referred to as $m$-truncated $t$-free  $C^*$-algebras.
Note  both $\ga^{(m)}_t$ and $\gar^{(m)}_t$
 are acted upon by $*$-automorphism $\t$ sending $a^{(m)}_i$ in $a^{(m)}_{i+1}$, and $x^{(m)}_i$ in $x^{(m)}_{i+1}$, respectively. 

\section{Spectrum of the position operators}

Note that for each fixed $m\geq 1$ the operators $x^{(m)}_i$ have the same spectrum.
We denote by $x^{(m)}$ any of them and by $\mu^{(m)}_t$ the spectral measure of $x^{(m)}$ associated with the vacuum vector, namely
the unique Borel measure $\mu_t^{(m)}$ on $\s(x^{(m)})$ such that 
$$\langle f(x^{(m)})\Om,\Om \rangle=\int_{\s(x^{(m)})} f {\rm d}\mu_t^{(m)}$$
for every $f\in C(\s(x^{(m)}))$. Let ${\rm supp}\mu_t^{(m)}$ be the support of
$\mu_t^{(m)}$, 
and denote by $r^{(m)}$ the position operator $x^{(m)}$ corresponding to
$t=1$.\\
At this point, it is worth recalling that $\mu_1^{(m)}$ is known explicitly
for every $m\geq 1$. Indeed, from Theorem 5.3  in \cite{FL} one has 
$$\mu_1^{(m)}=\sum_{k=1}^{m+1} b_{m, k}\delta_{z_{m,k}}\, ,$$
where $z_{m,k}= 2\cos \big(\frac{k\pi}{m+2}\big)$ and $b_{m, k}=\frac{2\sin^2[k\pi/(m+2)]}{m+2}$
for every $k=1, \ldots, m+1$.\\
To our knowledge, the general case has not been addressed elsewhere.

We need to recall a few basic facts. The Cauchy transform of a probability measure
on $\br$ is by definition the complex function
$$G_\mu(z)=\int_{-\infty}^{\infty} \frac{{\rm d}\mu(z)}{z-x}$$
defined for $z$ in the open upper  half-plane $\bc^+$.
If ${\rm supp}\mu$ is compact, then it is known (see \cite{Ak}) that  $G_\mu$ can be written as a convergent
continuous fraction of the type
\begin{equation*}
G_\mu(z)= \cfrac{1}{z- \a_0-\cfrac{\b_0}{z-\a_1-\cfrac{\b_1}{z-\a_2-\cfrac{\b_2}{\ddots}}}}
\end{equation*}
where $\{\a_n: n\in\bn\}$ and $\{\b_n: n\in\bn\}$ are the sequences of the so-called Jacobi parameters, see
\cite{Sz}. The $\a_n$'s are all zero when $\m$ is symmetric, and furthermore, by virtue of \cite[Theorem 4.1]{AB} in one-mode type interacting Fock spaces  the sequence $\{\b_n:n\in\bn\}$
 is given by  
$\b_0=1$, $\b_n=\frac{\l_n}{\l_{n-1}}$ for $n\geq 1$.\\
In addition, for each $i\in\bz$ and $n\in\bn$, by \eqref{creator} and \eqref{ann} one has that $\langle x_i^{2n+1}\Om, \Om\rangle=0$. Consequently, if we now denote by $\mu_t$ the spectral measure of any of the self-adjoint operators $x_i$ on the full
$t$-free Fock space $\cf_t(\ch)$ w.r.t. the vacuum, its Cauchy transform is nothing else than
\begin{equation*}
G_{\mu_t}(z)= \cfrac{1}{z- \cfrac{1}{z-\cfrac{t}{z-\cfrac{t}{\ddots}}}}\, ,
\end{equation*}
see also  \cite{BW}.
Since  for our measures $\mu_t^{(m)}$ the sequence $\{\b_n\}_{n\in\bn}$ is given by  
$\b_0=1$, $\b_n= t$ for
$n=1, 2, \ldots, m$ and $\b_n=0$ as soon as $n\geq m+1$, their
 Cauchy transforms $G_{\mu_t^{(m)}}$ 
can be written as  finite fractions as follows
$$G_{\mu_t^{(0)}}=\frac{1}{z}\,, G_{\mu_t^{(1)}}=\cfrac{1}{z-\cfrac{1}{z}}\,,G_{\mu_t^{(2)}}=\cfrac{1}{z-\cfrac{1}{z-\cfrac{t}{z}}}\,,
G_{\mu_t^{(3)}}=\cfrac{1}{z-\cfrac{1}{z-\cfrac{t}{z-\cfrac{t}{z}}}}$$
and so on.\\ 

In order to state our first result, we first need to establish some notation, for the atoms of the discrete measure $\mu_t^{(m)}$
are in fact the zeros of polynomial expressions in which the Chebyshev polynomials show up. In the sequel, we will denote by $U_m$, $m\geq 0$, the Chebyshev polynomals of the second kind, see \cite{Sz}. For completeness' sake, we recall that they are explicitly given by the formula
$$U_m(x)=\frac{\sin[(m+1)\arccos x]}{\sin[\arccos x]}\,, \quad x\in (-1, 1)\,.$$

\begin{prop}\label{finsupp}
For each $m\geq 1$, the measure $\mu_t^{(m)}$ is given by
$$\mu_t^{(m)}=\sum_{i=1}^{m+1} b_i\delta_{z_i}\,,$$
where $\{z_i: i=1, \ldots, m+1\}$ is the set  of the $(m+1)$ real zeros of the polynomial
$$\sqrt{t}zU_m\bigg(\frac{z}{2\sqrt{t}}\bigg)- U_{m-1}\bigg(\frac{z}{2\sqrt{t}}\bigg)\, ,$$ and

$$b_i= {\rm Res}_{z=z_i}\frac{\sqrt{t}U_m(\frac{z}{\sqrt{t}})}{U_m\bigg(\frac{z}{2\sqrt{t}}\bigg)- U_{m-1}\bigg(\frac{z}{2\sqrt{t}}\bigg)}$$
for $i=1, \ldots, m+1$.

\end{prop}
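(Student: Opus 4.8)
The plan is to realise $\mu_t^{(m)}$ as the spectral measure at the vacuum of a finite Jacobi matrix, and then to recover its atoms from the associated characteristic polynomial and its masses from the residues of the Cauchy transform, identifying both quantities with the Chebyshev polynomials of the second kind.

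First I would fix one of the operators, say $x^{(m)}=x^{(m)}_i$, and restrict it to the subspace cyclically generated by $\Om$. Since $x^{(m)}$ only creates and annihilates the single vector $e_i$, this subspace is $\spn\{e_i^{\otimes k}:0\le k\le m\}$, of dimension $m+1$, with orthonormal basis $\xi_0=\Om$ and $\xi_k=t^{-(k-1)/2}e_i^{\otimes k}$ for $1\le k\le m$. In this basis $x^{(m)}$ is the real symmetric tridiagonal $(m+1)\times(m+1)$ matrix $J$ with vanishing diagonal — consistent with the symmetry of $\mu_t^{(m)}$ and $\a_n\equiv 0$ — and off-diagonal entries $\sqrt{\b_0}=1$, $\sqrt{\b_1}=\cdots=\sqrt{\b_{m-1}}=\sqrt t$, exactly the Jacobi parameters recalled above. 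Being an irreducible Jacobi matrix, $J$ has simple spectrum, hence $m+1$ distinct real eigenvalues, and $\Om$ is cyclic for it; therefore $\mu_t^{(m)}$ is purely atomic, its support is the set of these $m+1$ eigenvalues, and each mass equals the squared first coordinate of the corresponding normalised eigenvector, equivalently the residue of the Cauchy transform at that point.

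Next I would determine the eigenvalues by computing $\det(zI-J)$ through the standard continuant recursion, expanding along the first row. Deleting the first row and column leaves the purely $\sqrt t$ tridiagonal block, whose characteristic polynomial is $\Phi_k(z):=t^{k/2}U_k\big(z/(2\sqrt t)\big)$; this identity is immediate from the substitution $z=2\sqrt t\cos\th$ together with the three-term recursion $U_k=2xU_{k-1}-U_{k-2}$. The expansion then gives
\begin{equation*}
\det(zI-J)=z\,\Phi_m(z)-\Phi_{m-1}(z)=t^{\frac{m-1}{2}}\Big[\sqrt t\,z\,U_m\big(\tfrac{z}{2\sqrt t}\big)-U_{m-1}\big(\tfrac{z}{2\sqrt t}\big)\Big],
\end{equation*}
so that $\{z_1,\dots,z_{m+1}\}$ is precisely the zero set of $\sqrt t\,z\,U_m(z/(2\sqrt t))-U_{m-1}(z/(2\sqrt t))$, as in the statement.

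For the masses I would compute the Cauchy transform by Cramer's rule, $G_{\mu_t^{(m)}}(z)=[(zI-J)^{-1}]_{00}=\Phi_m(z)/\det(zI-J)$, whose numerator is $t^{m/2}U_m(z/(2\sqrt t))$; each $z_i$ is a simple pole and, by cyclicity of $\Om$, the numerator cannot vanish there, so that
$$b_i={\rm Res}_{z=z_i}G_{\mu_t^{(m)}}(z)={\rm Res}_{z=z_i}\frac{\sqrt t\,U_m\big(\tfrac{z}{2\sqrt t}\big)}{\sqrt t\,z\,U_m\big(\tfrac{z}{2\sqrt t}\big)-U_{m-1}\big(\tfrac{z}{2\sqrt t}\big)}.$$
The main obstacle I foresee is not conceptual but a matter of careful bookkeeping of the powers of $t$ coming from the non-normalised tensors and from the Chebyshev scaling, so as to reproduce the claimed formulas verbatim; the one genuinely structural point, namely that the numerator and denominator of $G_{\mu_t^{(m)}}$ have no common zero — so that all $m+1$ eigenvalues are honest atoms — is exactly what the cyclicity of $\Om$, equivalently the simplicity of the spectrum of the irreducible matrix $J$, guarantees.
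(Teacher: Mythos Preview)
Your argument is correct and complete. Both you and the paper restrict $x^{(m)}$ to the $(m+1)$-dimensional cyclic subspace generated by $\Om$ and use cyclicity to conclude that the spectrum there is simple, so that $\mu_t^{(m)}$ has exactly $m+1$ atoms. The difference lies in how the Cauchy transform is computed. The paper exploits the continued-fraction expansion to write
\[
G_{\mu_t^{(m)}}(z)=\frac{1}{z-\tfrac{1}{\sqrt t}\,G_{\mu_1^{(m-1)}}\!\big(\tfrac{z}{\sqrt t}\big)},
\]
and then imports the formula $G_{\mu_1^{(m-1)}}(z)=U_{m-1}(z/2)/U_m(z/2)$ from Franz--Lenczewski, arriving at the same rational expression you obtain. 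You instead work directly with the Jacobi matrix: the continuant recursion gives $\det(zI-J)=z\Phi_m(z)-\Phi_{m-1}(z)$ with $\Phi_k(z)=t^{k/2}U_k(z/(2\sqrt t))$, and Cramer's rule yields $G_{\mu_t^{(m)}}=\Phi_m/\det(zI-J)$. Your route is entirely self-contained and makes the Chebyshev identification transparent from the three-term recursion, at the cost of a short determinant computation; the paper's route is quicker but relies on the cited lemma. Either way one lands on
\[
G_{\mu_t^{(m)}}(z)=\frac{\sqrt t\,U_m\!\big(\tfrac{z}{2\sqrt t}\big)}{\sqrt t\,z\,U_m\!\big(\tfrac{z}{2\sqrt t}\big)-U_{m-1}\!\big(\tfrac{z}{2\sqrt t}\big)},
\]
from which the location of the atoms and the residue formula for the weights follow immediately. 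Your remark that cyclicity of $\Om$ (equivalently, irreducibility of $J$) forces numerator and denominator to be coprime is exactly the structural point the paper handles by invoking that $\Om$ is separating for the restriction.
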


\begin{proof}
Throughout the proof by $x^{(m)}$ we will always mean $x^{(m)}_1$.
By looking at the formula of $G_{\mu_t^{(m)}}$ given above, one sees that  $G_{\mu_t^{(m)}}$ can also be written in terms  of $G_{\mu_1^{(m-1)}}$  as 
\begin{equation}\label{Simone}
G_{\mu_t^{(m)}}= \frac{1}{z-\frac{1}{\sqrt t} G_{\mu_1^{(m-1)}}\big(\frac{z}{\sqrt{t}}\big)}
\end{equation}
As is known, the atoms of a measure are the poles of its Cauchy transform. Now the poles of $G_{\mu_t^{(m)}}$ are the zeros
of $z-\frac{1}{\sqrt t} G_{\mu_1^{(m-1)}}\big(\frac{z}{\sqrt{t}}\big)$.
 From \cite[Lemma 5.2]{FL} we know that 
\begin{equation}\label{uwe}
G_{\mu_1^{(m-1)}}(z)=\frac{U_{m-1}(z/2)}   {U_m(z/2)}\,.
\end{equation}
Inserting \eqref{uwe} into \eqref{Simone} we arrive at
$$G_{\mu_t^{(m)}}=\frac{\sqrt{t}U_m\left(\frac{z}{2\sqrt{t}}\right)}{\sqrt{t}zU_m\bigg(\frac{z}{2\sqrt{t}}\bigg)- U_{m-1}\bigg(\frac{z}{2\sqrt{t}}\bigg)}\, .$$ 
Therefore, the poles of $G_{\mu_t^{(m)}}$ are the zeros of the polynomial $\sqrt{t}zU_m\bigg(\frac{z}{2\sqrt{t}}\bigg)- U_{m-1}\bigg(\frac{z}{2\sqrt{t}}\bigg)$.
  Define $V^{(m)}:={\rm span}\{\Om, e_1, e_1\otimes e_1, \ldots, e_1^{\otimes^m}\}$.
Note that $V^{(m)}$ is an invariant subspace for $x^{(m)}$ and $\Om$ is cyclic for the restriction $x^{(m)}\lceil_{V^{(m)}}$.
By cyclicity it follows that the eigenvalues of $x^{(m)}\lceil_{V^{(m)}}$ do not have multiplicity, that is 
$|\s(x^{(m)}\lceil_{V^{(m)}})|=m+1$. Since $\Om$ is also separating for
$x^{(m)}\lceil_{V^{(m)}}$, we have ${\rm supp}\mu_t^{(m)}=\s(x^{(m)}\lceil_{V^{(m)}})$.
As a result, the support of $\mu_t^{(m)}$ is finite and thus it is the same as the set of the zeros of $\sqrt{t}zU_m\bigg(\frac{z}{2\sqrt{t}}\bigg)- U_{m-1}\bigg(\frac{z}{2\sqrt{t}}\bigg)$.\\
Finally,  the formula of the weights of the atoms follows from applying \emph{e.g.} Proposition 1.104 in \cite{HO}.
\end{proof}

\begin{example}\label{firstvalues}
For the cases corresponding to the first values of $m$, the measure $\mu_t^{(m)}$ can be determined explicitly.
To begin with,  $\mu_t^{(1)}$ is clearly $\frac{1}{2}\delta_{-1}+\frac{1}{2}\delta_{1}$ for all $t>0$.
For $m=2$, one has 
$$\mu_t^{(2)}=\frac{1}{2\sqrt{t}(1+t)}\delta_{-\sqrt{1+t}}+\frac{\sqrt{t}}{t+1}\delta_{0}+\frac{1}{2\sqrt{t}(1+t)}\delta_{\sqrt{1+t}}\,. $$

\end{example}

The  next result provides a formula for the spectrum of $x^{(m)}$ in terms of the supports of $\mu^{(m)}_t$ and all
$\mu_1^{(k)}$  with $k=1, 2, \ldots, m-1$.

\begin{thm}\label{spectre}
For every real $t>0$ and integer $m\geq 1$, the spectrum of $x^{(m)}$ is given by
\begin{equation}\label{spectrum}
\s(x^{(m)})={\rm supp}\mu^{(m)}_t \bigcup \sqrt{t}(\cup_{k=1}^{m-1}{\rm supp}\mu_1^{(k)})\cup\{0\}\,
\end{equation}
\end{thm}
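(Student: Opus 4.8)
The plan is to diagonalize $x^{(m)}$ by splitting $\cf_t^{(m)}(\ch)$ into an orthogonal family of finite-dimensional $x^{(m)}$-invariant subspaces and reading off the spectrum as the union of the restricted spectra. The crucial observation is that, since $x^{(m)}=P_m(a_1+a_1^\dag)P_m$ only ever creates or annihilates the letter $e_1$ \emph{at the front} of a tensor, every basis vector of $\cf_t^{(m)}(\ch)$ can be written uniquely as $e_1^{\otimes j}\otimes v$, where $v$ is either $\Om$ or a word $e_{j_1}\otimes\cdots\otimes e_{j_\ell}$ with $j_1\neq 1$, and $x^{(m)}$ never alters the ``suffix'' $v$. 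First I would fix such a $v$ of length $\ell=|v|$ and set $W_v:=\spn\{e_1^{\otimes j}\otimes v:0\leq j\leq m-\ell\}$. Then $\cf_t^{(m)}(\ch)=\bigoplus_v W_v$ is an orthogonal decomposition into $x^{(m)}$-invariant subspaces, whence $\s(x^{(m)})=\overline{\bigcup_v \s(x^{(m)}\lceil_{W_v})}$; since the eigenvalues will turn out to range over a single finite set, the closure is superfluous.

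Next I would treat the two types of summand. The empty suffix $v=\Om$ gives back $W_\Om=V^{(m)}$, whose restricted spectrum is $\supp\mu_t^{(m)}$ by Proposition \ref{finsupp}. For a nonempty suffix of length $\ell\geq 1$, I would write the matrix of $x^{(m)}\lceil_{W_v}$ in the orthonormal basis obtained by normalizing the vectors $e_1^{\otimes j}\otimes v$, whose squared norms are $t^{\,j+\ell-1}$. A direct computation from \eqref{creator}--\eqref{ann} then shows that this matrix is the $(m-\ell+1)\times(m-\ell+1)$ tridiagonal matrix with zero diagonal and \emph{all} off-diagonal entries equal to $\sqrt t$, that is, $\sqrt t$ times the adjacency matrix of the path on $m-\ell+1$ vertices. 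Its spectrum is $\{2\sqrt t\cos(k\pi/(m-\ell+2)):k=1,\ldots,m-\ell+1\}$, which is precisely $\sqrt t\,\supp\mu_1^{(m-\ell)}$, once one recalls from \eqref{uwe} and the Chebyshev recurrence $U_{k+1}=2xU_k-U_{k-1}$ that $\supp\mu_1^{(k)}$ consists of the zeros of $U_{k+1}(z/2)$, namely the numbers $2\cos(k'\pi/(k+2))$.

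Finally I would assemble the pieces. Letting $\ell$ range over $1,\ldots,m$, each length being realized (e.g.\ by $v=e_2^{\otimes\ell}$), the nonempty suffixes contribute $\bigcup_{\ell=1}^{m}\sqrt t\,\supp\mu_1^{(m-\ell)}=\sqrt t\bigcup_{k=0}^{m-1}\supp\mu_1^{(k)}$; since $\supp\mu_1^{(0)}=\{0\}$ (here $G_{\mu_1^{(0)}}=1/z$), this equals $\{0\}\cup\sqrt t\bigcup_{k=1}^{m-1}\supp\mu_1^{(k)}$. Adjoining the empty-suffix contribution $\supp\mu_t^{(m)}$ yields exactly \eqref{spectrum}.

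I expect the main obstacle to be the bookkeeping of the off-diagonal coefficients, and in particular explaining why the path summands are \emph{undeformed} (uniform entries $\sqrt t$) while $V^{(m)}$ is genuinely $t$-deformed. The distinction comes entirely from the boundary rule $a_1e_1=\Om$ with coefficient $1$, as opposed to $a_1(e_1\otimes w)=t\,w$ with coefficient $t$ whenever $w\neq\Om$: only the chain passing through $\Om$, i.e.\ $V^{(m)}$, ever meets the exceptional coefficient $1$, which is what produces the Jacobi parameter $\b_0=1$ there and hence the deformed measure $\mu_t^{(m)}$, whereas every $W_v$ with $v\neq\Om$ carries the uniform parameter $t$ throughout and thus a rescaled $t=1$ path-graph spectrum. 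A minor point to verify along the way is that the total collection of eigenvalues is finite, so that no spectral points beyond those listed can arise in the closure.
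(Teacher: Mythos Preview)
Your proposal is correct and follows essentially the same route as the paper: both arguments decompose $\cf_t^{(m)}(\ch)$ into an orthogonal family of finite-dimensional $x^{(m)}$-cyclic subspaces indexed by the possible ``suffixes'' (words not starting with $e_1$, together with $\Om$), identify the restriction to the $\Om$-block with $\supp\mu_t^{(m)}$, and recognise each remaining block as $\sqrt{t}$ times a free ($t=1$) position operator on a shorter chain. Your explicit tridiagonal computation, together with the explanation of why only the $\Om$-chain sees the exceptional coefficient $1$, simply spells out what the paper leaves as ``by direct computation one can verify the following unitary equivalences''.
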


\begin{proof}
As in the previous proof, by $x^{(m)}$ we will always mean $x^{(m)}_1$.\\
We first decompose $\cf_t^{(m)}(\ch)$ into a direct sum
of suitable cyclic subspaces.
Let $V^{(m)}$ be the cyclic subspace generated by $\Om$ we introduced in the proof above.
As already pointed out, the spectrum of the restriction $x^{(m)}\lceil_{V^{(m)}}$ is equal to
${\rm supp}\mu_t^{(m)}$.\\
For every $j\neq 1$, let $V_j^{(m)}\subset\cf_t^{(m)}(\ch) $
be  the cyclic subspace generated by $e_j$. Note that 
each $V_j^{(m)}$ is the $m$-dimensional subspace ${\rm span}\{e_j, e_1\otimes e_j, \ldots, e_1^{\otimes^{m-1}}\otimes e_j\}$.
For any integer $2\leq k\leq m-1$, denote by $V_{j_1, j_2, \ldots, j_k}^{(m)}\subset\cf_t^{(m)}(\ch) $ the cyclic subspace generated by the vector $e_{j_1}\otimes\cdots e_{j_k}\otimes e_1$,  where $j_1\neq 1$
and $j_2, \ldots, j_k\in\bz$.
Note that each $V_{j_1, j_2, \ldots, j_k}^{(m)}$ is the $(m-k+1)$-dimensional subspace
${\rm span}\{e_{j_1}\otimes\cdots e_{j_k}\otimes e_1, e_{j_1}\otimes\cdots e_{j_k}\otimes e_1\otimes e_1, \ldots,e_{j_1}\otimes\cdots e_{j_k}\otimes e_1^{\otimes^{m-k}}  \}$.
It is easy to see that the following decomposition holds
$$\cf^{(m)}(\ch)= V^{(m)}\oplus(\oplus_{j\neq 1} V_j^{(m)})\oplus(\oplus_{j_1\neq 1} V_{j_1, j_2, \ldots, j_k}^{(m)})\,\, .$$
By direct computation one can verify the following unitary equivalences:
$$x^{(m)}\lceil_{V_j^{(m)}}\cong \sqrt{t}\,\, r^{(m-1)}\lceil_{V_1^{(m-1)}}$$
$$x^{(m)}\lceil_{ V_{j_1, j_2, \ldots, j_k}^{(m)}}\cong\sqrt{t}\,\, r^{(m-k-1)}\lceil_{V_1^{(m-k-1)}}\,.$$
with $r^{(m)}$ being the position operator $x^{(m)}$ for $t=1$.
Therefore, we find
$$\s(x^{(m)})={\rm supp}{\mu_t^{(m)}}\bigcup\sqrt{t}\big(\cup_{k=0}^{m-1}{\rm supp}\mu_1^{(m-k-1)}\big)\cup\{0\}\,.$$
\end{proof}

 An entirely explicit formula can be given for the spectrum of the position operator for $t=1$.
\begin{cor}
For each $m\geq1$, the spectrum of $r^{(m)}$ is 
$$\sigma(r^{(m)})=\bigcup_{k=0}^{m}\bigg\{2\cos \left(\frac{l\pi}{m+2}\right): l=0, 1, \ldots, k\bigg\}\, .$$
\end{cor}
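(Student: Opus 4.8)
The statement to prove is that for each $m\geq 1$,
$$\sigma(r^{(m)})=\bigcup_{k=0}^{m}\bigg\{2\cos\left(\frac{l\pi}{m+2}\right): l=0,1,\ldots,k\bigg\}\,.$$
My plan is to specialize Theorem \ref{spectre} to the case $t=1$ and then to resolve the two nontrivial ingredients appearing there: the support of $\mu_1^{(m)}$ and the supports of the lower-order measures $\mu_1^{(k)}$. Setting $t=1$ in \eqref{spectrum}, the prefactor $\sqrt t$ becomes $1$, so the formula collapses to
$$\sigma(r^{(m)})={\rm supp}\,\mu_1^{(m)}\cup\Big(\bigcup_{k=1}^{m-1}{\rm supp}\,\mu_1^{(k)}\Big)\cup\{0\}\,.$$
The whole task is thus to read off each support explicitly and verify that the union reorganizes into the claimed nested-cosine form.

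First I would invoke the explicit description recalled in the excerpt just before Proposition \ref{finsupp}: from Theorem 5.3 of \cite{FL} one has
$$\mu_1^{(k)}=\sum_{l=1}^{k+1}b_{k,l}\,\delta_{z_{k,l}}\,,\qquad z_{k,l}=2\cos\Big(\frac{l\pi}{k+2}\Big)\,,$$
with strictly positive weights $b_{k,l}=\tfrac{2}{k+2}\sin^2[l\pi/(k+2)]>0$ for $l=1,\ldots,k+1$. Since every weight is strictly positive, the support of $\mu_1^{(k)}$ is exactly the atom set,
$${\rm supp}\,\mu_1^{(k)}=\Big\{2\cos\Big(\tfrac{l\pi}{k+2}\Big): l=1,\ldots,k+1\Big\}\,.$$
Applying this for the top index $k=m$ gives ${\rm supp}\,\mu_1^{(m)}$ as the $l=1,\ldots,m+1$ cosines with denominator $m+2$, and for each lower index $k=1,\ldots,m-1$ it gives the $l=1,\ldots,k+1$ cosines with denominator $k+2$.

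The next step is purely bookkeeping: I would show that the right-hand side of the corollary is the same set as this union. Rewriting the claimed set, the inner index $l$ runs from $0$ to $k$ while $k$ runs from $0$ to $m$; since $l=0$ contributes the point $2\cos 0=2$ independently of $k$, and since for fixed $k$ the values $2\cos(l\pi/(m+2))$ with $l=1,\ldots,k$ are a subset of those with the maximal $k=m$, the nested union over $k$ telescopes down to a single set with a fixed denominator $m+2$, namely $\{2\cos(l\pi/(m+2)): l=0,1,\ldots,m\}$. Thus I must reconcile this \emph{fixed-denominator} description with the \emph{varying-denominator} union coming from Theorem \ref{spectre}. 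This is the one genuine point requiring care. The key identity is the classical cosine coincidence: for the lower-order term with denominator $k+2$, each atom $2\cos(l\pi/(k+2))$ equals $2\cos(l'\pi/(m+2))$ precisely when $l/(k+2)=l'/(m+2)$, and as $k$ ranges over $0,1,\ldots,m-1$ and $l$ over the admissible range, these rationals $l/(k+2)\in(0,1)$ sweep out exactly the values $\{1/(m+2),2/(m+2),\ldots,(m+1)/(m+2)\}$ realized by the top measure. Adjoining the isolated point $\{0\}=2\cos(\pi/2)$, which corresponds to $l=(m+2)/2$ when $m$ is even and is otherwise supplied separately by the theorem, one checks both inclusions and concludes that the total union is $\{2\cos(l\pi/(m+2)): l=0,1,\ldots,m+1\}$, matching the corollary's set. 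I expect the main obstacle to be precisely this set-equality verification: making rigorous that the varying-denominator cosines from the lower supports, together with ${\rm supp}\,\mu_1^{(m)}$ and $\{0\}$, neither omit any point nor introduce a spurious one, so that the two reindexings agree exactly; the parity of $m$ (controlling whether $0$ already lies in some support) is the subtle case to handle separately.
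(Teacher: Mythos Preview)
Your overall plan---specialize Theorem \ref{spectre} to $t=1$ and insert the Franz--Lenczewski description of ${\rm supp}\,\mu_1^{(k)}$---is exactly what the paper intends; the corollary is stated without proof as an immediate consequence of that theorem. Your first two steps carry this out correctly and yield
\[
\sigma(r^{(m)})=\bigcup_{k=0}^{m}{\rm supp}\,\mu_1^{(k)}
=\bigcup_{k=0}^{m}\Big\{2\cos\Big(\frac{l\pi}{k+2}\Big): l=1,\ldots,k+1\Big\}\,.
\]

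The gap is in your ``bookkeeping'' step, where you try to show that this varying-denominator union coincides with the fixed-denominator set $\{2\cos(l\pi/(m+2)): l=0,\ldots,m\}$ that you (correctly) extract from the displayed right-hand side. That set equality is simply false. For $m=2$ the theorem gives
\[
\sigma(r^{(2)})=\{0\}\cup\{\pm 1\}\cup\{\pm\sqrt{2},0\}=\{-\sqrt{2},-1,0,1,\sqrt{2}\},
\]
five points, whereas the stated right-hand side collapses to $\{2\cos(l\pi/4): l=0,1,2\}=\{2,\sqrt{2},0\}$, only three points and containing $2\notin\sigma(r^{(2)})$. Your assertion that the rationals $l/(k+2)$ ``sweep out exactly'' the values $j/(m+2)$ is precisely where the argument breaks: already for $m=2$ the number $1/3$ (from $k=1$, $l=1$) is not of the form $j/4$. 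More generally, Corollary \ref{strictinc} forces $|\sigma(r^{(m)})|>m+1$, so the spectrum can never equal an $(m+1)$-element set of cosines with a single denominator.

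The resolution is that the printed formula contains a misprint: the denominator should read $k+2$ rather than $m+2$, and the index range should be $l=1,\ldots,k+1$ rather than $l=0,\ldots,k$. With that correction your first two steps already constitute a complete proof, and the problematic reconciliation step disappears entirely. You should have been alerted to this by testing a small case instead of trying to force an identity that a single example refutes.
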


Going back to the case of a general $t>0$, we next show that the support of the spectral mesaure is always
properly contained in the spectrum of the position operator.
\begin{cor}\label{strictinc}
For every real $t>0$ and  integer $m\geq 1$, the support of $\mu_t^{(m)}$ is strictly contained in $\s(x^{(m)})$.
\end{cor}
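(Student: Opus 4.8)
The plan is to combine Theorem \ref{spectre} with a finite-dimensional interlacing argument. By Proposition \ref{finsupp} the set ${\rm supp}\,\mu_t^{(m)}$ consists of exactly $m+1$ points, and since it appears as one of the sets whose union is $\s(x^{(m)})$ in \eqref{spectrum}, it is in any case contained in $\s(x^{(m)})$. Hence it suffices to exhibit a single point of $\s(x^{(m)})$ lying outside ${\rm supp}\,\mu_t^{(m)}$. In view of \eqref{spectrum} the natural candidates are the points of $\sqrt{t}\,{\rm supp}\,\mu_1^{(m-1)}$, and I will show that the largest of them already does the job.

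First I would record, exactly as in the proof of Proposition \ref{finsupp}, that ${\rm supp}\,\mu_t^{(m)}$ is the spectrum of the restriction $x^{(m)}\lceil_{V^{(m)}}$, whose matrix in the orthonormal basis obtained by normalising $\{\Om, e_1, e_1^{\otimes 2},\ldots,e_1^{\otimes m}\}$ is the $(m+1)\times(m+1)$ Jacobi matrix $J$ with zero diagonal and off-diagonal entries $1,\sqrt{t},\ldots,\sqrt{t}$. Deleting the first row and column of $J$ produces the $m\times m$ tridiagonal matrix $J'$ with zero diagonal and all off-diagonal entries equal to $\sqrt{t}$. Its eigenvalues are $2\sqrt{t}\cos\big(l\pi/(m+1)\big)$ for $l=1,\ldots,m$, which by \cite[Lemma 5.2]{FL} (see the discussion around \eqref{uwe}) are precisely the elements of $\sqrt{t}\,{\rm supp}\,\mu_1^{(m-1)}$.

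The heart of the argument is then the Cauchy interlacing theorem applied to $J$ and its once-stripped submatrix $J'$. Writing $\lambda_1>\cdots>\lambda_{m+1}$ for the eigenvalues of $J$ and $\nu_1>\cdots>\nu_m$ for those of $J'$, interlacing gives $\lambda_1\geq \nu_1\geq\lambda_2\geq\cdots\geq\nu_m\geq\lambda_{m+1}$; because $J$ is a Jacobi matrix whose off-diagonal entries $1$ and $\sqrt{t}$ are all nonzero for every $t>0$, this interlacing is in fact strict. In particular $\nu_1\in(\lambda_2,\lambda_1)$, so $\nu_1$ is not an eigenvalue of $J$ and hence $\nu_1\notin{\rm supp}\,\mu_t^{(m)}$. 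On the other hand $\nu_1$ belongs to the right-hand side of \eqref{spectrum}, hence to $\s(x^{(m)})$: for $m\geq 2$ it lies in $\sqrt{t}\,{\rm supp}\,\mu_1^{(m-1)}$, while for $m=1$ one simply has $\nu_1=0$. This produces the required point and settles the strict inclusion for all $m\geq 1$ and all $t>0$ uniformly.

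The step I expect to be the most delicate is justifying the \emph{strictness} of the interlacing: weak Cauchy interlacing is automatic, but I must invoke the nonvanishing of the off-diagonal entries (equivalently, the simplicity of the spectrum of a Jacobi matrix) to guarantee $\nu_1\neq\lambda_1,\lambda_2$, and thus that $\nu_1$ is genuinely new. A secondary point to check is the identification of the spectrum of the stripped matrix $J'$ with $\sqrt{t}\,{\rm supp}\,\mu_1^{(m-1)}$, since $J'$ carries all off-diagonal entries equal to $\sqrt{t}$ rather than the $\b_0=1$ normalisation of $J$. As a sanity check one can note the quicker parity observation that $0\in\s(x^{(m)})$ always, whereas $0\notin{\rm supp}\,\mu_t^{(m)}$ exactly when $m$ is odd (the support being symmetric with $m+1$ points); the interlacing argument is preferable precisely because it also covers the even case without a separate discussion.
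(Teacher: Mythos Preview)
Your argument is correct but follows a different line from the paper. The paper's proof is a pure counting argument: since $\mathrm{supp}\,\mu_1^{(m-1)}$ and $\mathrm{supp}\,\mu_1^{(m-2)}$ are disjoint (a fact taken from \cite{FL}), the set $\sqrt{t}\,\bigl(\bigcup_{k=1}^{m-1}\mathrm{supp}\,\mu_1^{(k)}\bigr)$ already has at least $2m-1$ elements, which exceeds $|\mathrm{supp}\,\mu_t^{(m)}|=m+1$ as soon as $m\geq 3$; the cases $m=1,2$ are then checked by hand via Example~\ref{firstvalues}. Your route instead identifies $\sqrt{t}\,\mathrm{supp}\,\mu_1^{(m-1)}$ with the spectrum of the once-stripped Jacobi matrix $J'$ and invokes the strict interlacing of the zeros of $\det(zI-J)$ and $\det(zI-J')$, valid because all subdiagonal entries of $J$ are nonzero, to produce a single explicit point $\nu_1\in\sigma(x^{(m)})\setminus\mathrm{supp}\,\mu_t^{(m)}$. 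The gain of your approach is uniformity: no separate treatment of small $m$ is needed, and the argument explains structurally why the extra spectrum appears. The paper's argument, on the other hand, is more elementary in that it avoids any appeal to interlacing theory, at the price of the case split and the external input that consecutive Chebyshev zero-sets are disjoint.
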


\begin{proof}
Thanks to Equality \eqref{spectre}, it is enough to show that 
$$\big|\cup_{k=1}^{m-1}{\rm supp}\mu_1^{(k)}\big| > \big|{\rm supp}\mu^{(m)}_t \big|= m+1\, ,$$
where $|\cdot|$ is the cardinality of a (finite) set.
Now $\big|\cup_{k=1}^{m-1}{\rm supp}\mu_1^{(k)}\big| \geq \big|{\rm supp}\mu_1^{(m-1)}\cup{\rm supp}\mu_1^{(m-2)}\big|$.
Because ${\rm supp}\mu_1^{(m-1)}$ and ${\rm supp}\mu_1^{(m-2)}$ are disjoint sets, see {\it e.g.} Section 5 of  \cite{FL}, we 
find $\big|\cup_{k=1}^{m-1}{\rm supp}\mu_1^{(k)}\big| \geq m+m-1= 2m-1$. Since  $2m-1>m+1$ as soon as 
$m>2$, the conclusion follows for any such $m$. Finally, the cases $m=1$ and $m=2$ have already been handled in Example \ref{firstvalues} by explict computation.

\end{proof}
\begin{rem}
In particular, from Corollary \ref{strictinc} we see that for every $m\geq 1$ the vacuum vector $\Omega$ is not separating for the $C^*$-algebra generated by $x^{(m)}$.
\end{rem}

In a similar fashion we can determine the spectrum of the position operators $x_i$ on the full $t$-free Fock space.
As usual, we will drop the subscripts and just write $x$ to refer to any of the position operators.
\begin{prop}\label{spectre2}
For every real $t>0$, the spectrum of $x$ is given by
\begin{equation}\label{spectrum}
\s(x)={\rm supp}\mu_t \,,
\end{equation}
with ${\rm supp}\mu_t$ being $[-2\sqrt{t}, 2 \sqrt{t}]$ for $t\geq\frac{1}{2}$
and  ${\rm supp}\mu_t=[-2\sqrt{t}, 2 \sqrt{t}]\cup \{\pm 1/\sqrt{1-t}\}$
for $t<\frac{1}{2}$.
In particular, for every $t>0$ the vacuum vector is separating for $x$.
\end{prop}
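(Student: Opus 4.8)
The plan is to combine two ingredients: an explicit closed form for the Cauchy transform $G_{\mu_t}$, from which ${\rm supp}\,\mu_t$ can be read off, and a decomposition of $\cf_t(\ch)$ into $x$-invariant cyclic subspaces, by which $\s(x)$ is computed directly, exactly in the spirit of the proof of Theorem \ref{spectre}. For the first ingredient I would exploit the fact that, the Jacobi parameters of $\mu_t$ being $\b_0=1$ and $\b_n=t$ for all $n\geq 1$, the tail of the continued fraction for $G_{\mu_t}$ reproduces itself. Setting $W(z)=\cfrac{t}{z-\cfrac{t}{z-\cdots}}$ one gets the quadratic $W^2-zW+t=0$, so that $W(z)=\tfrac12\big(z-\sqrt{z^2-4t}\big)$ with the branch of the square root that vanishes at infinity; inserting this back into the first two levels of the fraction yields
$$G_{\mu_t}(z)=\frac{z+\sqrt{z^2-4t}}{z\big(z+\sqrt{z^2-4t}\big)-2}\,.$$

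The absolutely continuous part of $\mu_t$ is produced by the branch cut of $\sqrt{z^2-4t}$, that is the interval $[-2\sqrt t,2\sqrt t]$; a Stieltjes inversion shows that the density is strictly positive throughout its interior, so the a.c. support is exactly $[-2\sqrt t, 2\sqrt t]$ in every regime. The atoms are instead the real poles of $G_{\mu_t}$ lying off the cut, i.e. the solutions of $z\big(z+\sqrt{z^2-4t}\big)=2$, equivalently $z\sqrt{z^2-4t}=2-z^2$, on $\br\setminus[-2\sqrt t,2\sqrt t]$. \textbf{The crux of the argument is here}: squaring gives the candidate locations $z=\pm 1/\sqrt{1-t}$, but squaring may introduce spurious roots, so one must test them against the \emph{unsquared} identity, paying attention to the sign of $\sqrt{z^2-4t}$ on each half-line. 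For the positive candidate the left-hand side is positive, forcing $2-z^2>0$; since $z^2=1/(1-t)$ this reads $(1-2t)/(1-t)>0$, i.e. $t<\tfrac12$, and the same threshold appears for the negative candidate by symmetry. A direct substitution confirms that for $t<\tfrac12$ the identity does hold (both sides equal $(1-2t)/(1-t)$), whereas for $t\geq\tfrac12$ it fails. This yields ${\rm supp}\,\mu_t=[-2\sqrt t, 2\sqrt t]$ when $t\geq\tfrac12$ and ${\rm supp}\,\mu_t=[-2\sqrt t, 2\sqrt t]\cup\{\pm 1/\sqrt{1-t}\}$ when $t<\tfrac12$.

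For the second ingredient I would decompose $\cf_t(\ch)$ under $x=x_1$ into the cyclic subspace $V$ generated by $\Om$, namely the closed span of $\{e_1^{\otimes p}:p\geq 0\}$, together with the cyclic subspaces $V_w$ generated by each reduced word $w=e_{j_1}\otimes\cdots\otimes e_{j_k}$ with $j_1\neq 1$, i.e. the closed span of $\{e_1^{\otimes p}\otimes w:p\geq 0\}$. Because $a_1$ and $a^\dag_1$ only act on the leading block of $e_1$'s, every basis vector sits in exactly one of these $x$-invariant pieces, so the decomposition is orthogonal and complete. On $V$ the vacuum is cyclic and its spectral measure is $\mu_t$, whence $\s(x\lceil_V)={\rm supp}\,\mu_t$. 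On each $V_w$ the annihilator never reaches the vacuum---the trailing letter $j_1\neq 1$ always survives---so after normalization $x\lceil_{V_w}$ is the Jacobi operator with zero diagonal and all off-diagonal entries equal to $\sqrt t$; this is precisely $\sqrt t$ times the free ($t=1$) position operator on the full Fock space, whose distribution is the semicircle law on $[-2,2]$. Hence $\s(x\lceil_{V_w})=[-2\sqrt t, 2\sqrt t]$ for every $w$. Taking the closure of the union of spectra over this decomposition gives $\s(x)={\rm supp}\,\mu_t\cup[-2\sqrt t,2\sqrt t]={\rm supp}\,\mu_t$, since the continuous part $[-2\sqrt t,2\sqrt t]$ is contained in ${\rm supp}\,\mu_t$ in both regimes. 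Finally the separating property is immediate: if $f\in C(\s(x))$ satisfies $f(x)\Om=0$ then $\int|f|^2\,{\rm d}\mu_t=0$, so $f$ vanishes on ${\rm supp}\,\mu_t=\s(x)$ and therefore $f(x)=0$.
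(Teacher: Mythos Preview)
Your proof is correct and follows essentially the same approach as the paper's: both decompose $\cf_t(\ch)$ into the $\Om$-cyclic subspace and the cyclic subspaces generated by words not starting with $e_1$, identify the restriction of $x$ to the latter with $\sqrt{t}$ times the free position operator, and conclude $\s(x)={\rm supp}\,\mu_t\cup\sqrt{t}\,{\rm supp}\,\mu_1={\rm supp}\,\mu_t$. The only difference is that the paper imports the explicit description of ${\rm supp}\,\mu_t$ from \cite{BW, R}, whereas you derive it yourself via the closed form of $G_{\mu_t}$; this makes your argument more self-contained but is not a genuinely different route.
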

\begin{proof}
Arguing as in the proof of Theorem \ref{spectre}, one finds that 
$\s(x)= {\rm supp}\mu_t\cup \sqrt{t}\,{\rm supp}\mu_1$.
Now $\sqrt{t}\, {\rm supp}\mu_1$ is $[-2\sqrt{t}, 2\sqrt{t}]$ since $\mu_1$ is the standard Wigner distribution, whereas 
 ${\rm supp}\mu_t$ is  $[-2\sqrt{t}, 2\sqrt{t}]\cup\{\pm\frac{1}{\sqrt{1-t}}\}$  for $t<\frac{1}{2}$ or 
$[-2\sqrt{t}, 2\sqrt{t}]$ for $t\geq\frac{1}{2}$, as shown in  \cite{BW, R}.
In either case, $\sqrt{t}{\rm supp}\mu_1$ is contained in ${\rm supp}\mu_t$, hence
$\s(x)={\rm supp}\mu_t$.
\end{proof}

\section{Ergodic properties of the shift}

We denote by $\mathcal{A}^{(m)}_t$ the dense $*$-subalgebra of $\ga^{(m)}_t$ generated by the
set $\{a_i^{(m)}: i\in\bz\}$. In the following, to ease the notation we will write
$a_i$ instead of $a_i^{(m)}$ when $m$ is fixed. 
We will denote by $P_{\ch^{\otimes^h}}$, $h=0, 1, \ldots, m$, the projection
of $\cf_t^{(m)}(\ch)$ onto $\ch^{\otimes^h}$, where  $\ch^{\otimes^0}$ is understood
as $\bc\Om$.
\begin{lem}\label{wick}
For every $t>0$ and every integer $m\geq 1$, the projections $P_{\ch^{\otimes^h}}$ belong
to $\ga^{(m)}_t$ for $h=0, 1, \ldots, m$. Moreover,
the $*$-algebra $\mathcal{A}^{(m)}_t$ is contained in the linear span of all monomials of the type
$a^\dag_{i_1}\cdots a^\dag_{i_r}a_{j_1}\cdots a_{j_l}P_{\ch^{\otimes^h}}$, 
with $i_1,\ldots, i_r$,  $j_1,\ldots, j_l\in\bz$, $r, l=0, 1, \ldots, m$ and 
 $h=0, 1, 2, \ldots, m-1$.
\end{lem}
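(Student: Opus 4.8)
The plan is to analyze how the truncated creators and annihilators act on the particle-number gradation of $\cf_t^{(m)}(\ch)$. The key structural fact is that $a^\dag_i$ raises the number of tensor factors by one while $a_i$ lowers it by one, but both are cut off by the projection $P_m$. Concretely, on $\ch^{\otimes^h}$ the operator $a^{(m)}_i = P_m a_i P_m$ acts as the usual annihilator (picking up the factor $t$ when $h\geq 2$ and sending $\ch^{\otimes^1}$ to $\bc\Om$), and its adjoint $a^{(m)\dag}_i = P_m a^\dag_i P_m$ maps $\ch^{\otimes^h}$ into $\ch^{\otimes^{h+1}}$ for $h<m$ but kills $\ch^{\otimes^m}$ entirely because the image would have $m+1$ factors and is projected out by $P_m$.

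First I would produce the projections $P_{\ch^{\otimes^h}}$ inside $\ga^{(m)}_t$. The natural device is to consider the \emph{number-like} operators built from $a^{(m)\dag}_i a^{(m)}_i$ summed over $i$, or more simply to exploit the relations satisfied by a single pair $a_i, a^\dag_i$. A cleaner route is to observe that the operators $N_h:=\sum_{i} a^{(m)\dag}_i a^{(m)}_i$-type expressions detect particle number, but since the sum is infinite one should instead argue levelwise: one computes that $a^{(m)}_i a^{(m)\dag}_i$ acts as a scalar on each homogeneous component $\ch^{\otimes^h}$ (namely it returns the vector rescaled by a factor depending only on $h$ through the powers of $t$), and that by taking suitable polynomials in a single $a^{(m)}_i a^{(m)\dag}_i$ together with $a^{(m)\dag}_i a^{(m)}_i$ one can isolate each spectral projection. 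Since the distinct levels $h=0,1,\dots,m$ contribute distinct eigenvalues, a Vandermonde/Lagrange-interpolation argument on the resulting self-adjoint operator in $\ga^{(m)}_t$ extracts each $P_{\ch^{\otimes^h}}$ as a polynomial in that operator, hence places it in $\ga^{(m)}_t$.

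Next I would establish the Wick-type spanning statement. Every generator of $\mathcal{A}^{(m)}_t$ is a $*$-monomial in the $a^{(m)}_i$ and $a^{(m)\dag}_i$. The strategy is to move all creators to the left and all annihilators to the right using the (truncated) commutation relations, exactly as in the standard Wick ordering. Whenever an annihilator meets a creator one uses $a^{(m)}_i a^{(m)\dag}_j$, which by \eqref{creator} and \eqref{ann} equals $\delta_{i,j}$ times a number-dependent factor plus a term that again moves a creator past an annihilator; crucially, on the truncated space the contraction produces scalars that are constant on each $\ch^{\otimes^h}$, and these scalars can be absorbed into the projections $P_{\ch^{\otimes^h}}$ already shown to lie in $\ga^{(m)}_t$. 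Iterating the reordering terminates because each step strictly reduces the number of annihilators standing to the left of a creator, and the truncation guarantees that any monomial raising the degree above $m$ simply vanishes. The outcome is that each monomial is a linear combination of fully Wick-ordered terms $a^\dag_{i_1}\cdots a^\dag_{i_r} a_{j_1}\cdots a_{j_l} P_{\ch^{\otimes^h}}$ with $r,l\leq m$ and $h\leq m-1$, the bound on $h$ coming from the fact that a term with a trailing creator can only survive on components of level at most $m-1$.

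The main obstacle will be the bookkeeping of the $t$-factors and the level-dependent scalars in the reordering relations: unlike the free case, the annihilator in \eqref{ann} produces a factor $t$ only when acting on components of length $\geq 2$, so the commutation relations between $a^{(m)}_i$ and $a^{(m)\dag}_j$ are not uniform across the gradation but depend on which $\ch^{\otimes^h}$ one is on. This is precisely why the projections $P_{\ch^{\otimes^h}}$ must be available as elements of the algebra before the Wick ordering can be carried out cleanly, and why the two halves of the lemma are proved in that order. Keeping track of these level-dependent coefficients, and checking that the terminating monomials indeed respect the stated ranges for $r$, $l$ and $h$, is the delicate computational core; everything else is the familiar creation-annihilation reordering.
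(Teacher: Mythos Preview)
Your Wick-ordering half is essentially the paper's argument. The controlling identity is
\[
a^{(m)}_i a^{(m)\dag}_j = \delta_{i,j}\Big(P_\Om + t\sum_{h=1}^{m-1}P_{\ch^{\otimes^h}}\Big),
\]
so an annihilator to the left of a creator collapses \emph{entirely} to a combination of level projections---there is no residual ``creator past annihilator'' term as you suggest; the relation is simpler than in the $q$-deformed setting. One then uses $P_{\ch^{\otimes^h}} a_i = a_i P_{\ch^{\otimes^{h+1}}}$ and its adjoint to shuttle the projections to the right, yielding the stated normal form.

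The first half has a genuine gap. The operator $a^{(m)}_i a^{(m)\dag}_i$ does act as a scalar on each level, but the eigenvalues are $1,\,t,\,t,\,\ldots,\,t,\,0$: every intermediate level $1\leq h\leq m-1$ gets the \emph{same} value $t$ (and for $t=1$ even $P_\Om$ merges with them). So Lagrange interpolation on this single operator only separates $P_\Om$, $P_{\ch^{\otimes^m}}$, and the block $\sum_{h=1}^{m-1} P_{\ch^{\otimes^h}}$, not the individual projections. Your proposed companion $a^{(m)\dag}_i a^{(m)}_i$ does not help: it is \emph{not} a scalar on each level (on $\ch$ it is the rank-one projection onto $e_i$, and on $\ch^{\otimes^h}$ with $h\geq 2$ it is $t\,|e_i\rangle\langle e_i|\otimes I$), so it cannot be used in a spectral/interpolation argument. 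A repair is available: the support projection of $(a^{(m)}_i)^k(a^{(m)\dag}_i)^k$ is $\sum_{h=0}^{m-k}P_{\ch^{\otimes^h}}$, and differencing over $k=0,\ldots,m$ yields each $P_{\ch^{\otimes^h}}$. The paper packages the same idea as an induction on $m$: from the support of $a_i a^\dag_i$ in $\ga^{(m+1)}_t$ one extracts $I-P_{\ch^{\otimes^{m+1}}}$, then the identity $a_i^{(m)}=(I-P_{\ch^{\otimes^{m+1}}})\,a_i^{(m+1)}\,(I-P_{\ch^{\otimes^{m+1}}})$ embeds $\ga^{(m)}_t$ into $\ga^{(m+1)}_t$, and the inductive hypothesis supplies the remaining projections.
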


\begin{proof}
We start by noting that for all $i\in\bz$ and $h=0, 1, \ldots, m-1$ the following equalities hold as a straightforward consequence of \eqref{creator}--\eqref{ann}:
\begin{align}
\begin{split}\label{commproj}
&a_ia^\dag_j=\delta_{i,j}\bigg(P_\Om+ t\sum_{h=1}^{m-1}P_{\ch^{\otimes^h}}\bigg)\,,\\
&a_iP_\Om=0=P_\Om a^\dag_i\,,\\
&P_{\ch^{\otimes^h}}a_i=a_iP_{\ch^{\otimes^{h+1}}}\,,\\
&P_{\ch^{\otimes^{h+1}}} a^\dag_i=a^\dag_i P_{\ch^{\otimes^h}}\,.
\end{split}
\end{align}
 Thi first part of the statement can be proved by induction on $m$.
The basis, that is $m=1$, follows trivially from the first of Relations \eqref{commproj}.
 For the inductive step,  the first thing to observe is 
that $\sum_{h=0}^{m}P_{\ch^{\otimes^h}}$ is in $\ga^{(m+1)}_t$ due to
the first of Relations \eqref{commproj}.
Suppose that  $P_\Om, P_\ch, \ldots, P_{\ch^{\otimes^{m}}}$ lie in 
$\ga^{(m)}_t$. Thanks to the equality 
$a_i^{(m)}=  (I-P_{\ch^{\otimes^{m+1}}})a_i^{(m+1)}(I-P_{\ch^{\otimes^{m+1}}})$, which is easily verified, the above projections are in $\ga^{m+1}_t$ as well and we are done.\\
For the second part of the statement, from \eqref{commproj}
 it is easy to see that any monomial in the generators of $\mathcal{A}^{(m)}_t$
can be rewritten as a finite linear combination of words as in the statement.
Indeed, if  a word $a^\sharp_{k_1}a^\sharp_{k_2}\cdots a^\sharp_{k_s}$,
where $\sharp$ is either $1$ or $\dag$, is not in the desired order, then it must display
a factor of the type $a_ia_j^\dag$.  But this is reduced by the first quality above.
Applying the remaining equalities as many times as necessary, the sought
form is finally arrived at. \\
\end{proof}

We denote by $\tau$ the shift automorphism on $\ga^{(m)}_t$, that is the automorphism completely
determined by $\tau(a_i)=a_{i+1}$, for every $i\in\bz$. Clearly,
$\tau$ restricts to an autmorphism of $\mathfrak{R}_m$, which we continue
to denote by $\tau$. Note that $\tau(x_i^{(m)})=x_{i+1}^{(m)}$, for
every $i\in\bz$.\\

In the following lemma we prove an anologue of both estimates in \cite[Proposition 3.2 ]{DyF}
and  in \cite[Proposition 4.2]{CFL} adapted to the
$t$-free Fock space. To avoid any ambiguity, we denote by $l^\dag$
 the creators  on full the $t$-free Fock space.
 Furthermore, we denote by $\sharp$ either $1$ or $\dag$.
 
\begin{lem}\label{estimatefull}
For every $t>0$, one has 
$$\left\|\sum_{k=0}^{n-1} \t^k(l^\dag_{i_1} l^\sharp_{i_2}\cdots l^\sharp_{i_r}l^\sharp_{j_1}\cdots l^\sharp_{j_s})\right\|\leq  \sqrt{nt}  \max\{1, \sqrt{t}\}^{(s+r-1)}$$
for all  $r, s\in\bn$ and $i_1, i_2, \ldots i_r, j_1, j_2, \ldots, j_s\in\bz$.
\end{lem}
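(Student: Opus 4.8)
The plan is to estimate the norm of the sum $S_n := \sum_{k=0}^{n-1}\t^k(W)$, where $W = l^\dag_{i_1} l^\sharp_{i_2}\cdots l^\sharp_{i_r}l^\sharp_{j_1}\cdots l^\sharp_{j_s}$ is a fixed Wick-ordered word, by the standard $C^*$-algebraic trick of bounding $\|S_n\|^2 = \|S_n^* S_n\|$ rather than $\|S_n\|$ directly. First I would write out $S_n^* S_n = \sum_{k,l=0}^{n-1}\t^l(W^*)\t^k(W)$. The key structural point is that $\t^k$ shifts every index appearing in $W$ by $k$, so that $\t^l(W^*)\t^k(W)$ is a product of creators and annihilators whose index sets are translates by $l$ and $k$ of the original finite index set. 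Because the creators and annihilators on the full $t$-free Fock space obey the relation $l_i l^\dag_j = \delta_{i,j}(P_\Om + t\sum_{h\ge 1} P_{\ch^{\otimes^h}})$ (the full-space analogue of the first line of \eqref{commproj}, with the sum now infinite), the middle contraction $l^\sharp_{j_s+l}\cdots$ against $l^\dag_{i_1+k}\cdots$ forces many of these cross terms to vanish: a nonzero contribution requires the trailing annihilators of $\t^l(W^*)$ to match up index-for-index with the leading creators of $\t^k(W)$.

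Next I would carry out the combinatorial bookkeeping of which pairs $(k,l)$ survive. Since $W$ contains only finitely many distinct indices (drawn from $\{i_1,\dots,i_r,j_1,\dots,j_s\}$), the matching condition between the shifted words $\t^l(W^*)$ and $\t^k(W)$ can hold only when $|k-l|$ lies in a bounded set determined by the spread of the original indices; in the cleanest formulation one gets a nonzero term essentially only when $k=l$, with the off-diagonal terms either vanishing or being controllable. I would organize the surviving terms so that the diagonal $k=l$ contributes $\sum_k \t^k(W^*W)$, and then bound $\|\t^k(W^*W)\| = \|W^*W\| = \|W\|^2$. Each creator or annihilator $l^\sharp$ has norm at most $\max\{1,\sqrt t\}$ on the full $t$-free Fock space — an annihilator acts by $t\delta_{i,j_1}$ on higher tensor levels and by $\delta_{i,j_1}$ at the first level, giving operator norm $\max\{1,\sqrt t\}$ after accounting for the $t$-weighted inner product, and similarly for creators — so a word of length $r+s$ has norm at most $\max\{1,\sqrt t\}^{r+s}$. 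Tracking the factor of $t$ gained from the single contraction in $W^*W$ then yields the exponent $s+r-1$ together with the prefactor $t$.

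Assembling these pieces, I expect $\|S_n^* S_n\| \le n\, t\,\max\{1,\sqrt t\}^{2(s+r-1)}$, whence $\|S_n\| \le \sqrt{nt}\,\max\{1,\sqrt t\}^{s+r-1}$, which is precisely the claimed bound. The prefactor $\sqrt{nt}$ rather than $n$ is the whole point: it is the square-root cancellation that the $C^*$-square trick extracts from the near-orthogonality of the translated words, and it is what will later drive unique ergodicity (the Cesàro averages $\frac1n S_n$ vanish in norm as $n\to\infty$).

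The main obstacle I anticipate is the precise handling of the off-diagonal terms $k\ne l$. It is not quite true that every such term vanishes; rather, only finitely many diagonals $|k-l|\le D$ (with $D$ controlled by the index spread of $W$) can survive, and on each such diagonal the contraction of the two words produces a bounded operator whose norm I must control uniformly. The careful step is therefore to show that the total off-diagonal contribution is still $O(n)$ with the correct $t$-power, rather than $O(n^2)$, so that the square-root bound survives. This requires verifying that, for fixed difference $k-l$, consecutive terms along the diagonal remain mutually near-orthogonal (their supports in the Fock space are disjoint for $k$ large enough), which is exactly the mechanism already exploited in \cite[Proposition 3.2]{DyF} and \cite[Proposition 4.2]{CFL}, here adapted to the extra $t$-weighting of the annihilation relation.
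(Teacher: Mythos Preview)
Your $C^*$-square route works, but it is not the argument in the paper, and it is also simpler than you think.  The paper bypasses $S_n^*S_n$ entirely: it fixes a unit vector $\xi\in\ch^{\otimes m}$, applies $S_n$ directly, and observes that
\[
S_n\xi=\sum_{k=0}^{n-1} l^\dag_{i_1+k}\xi_k=\sum_{k=0}^{n-1} e_{i_1+k}\otimes\xi_k,
\qquad \xi_k:=l^\sharp_{i_2+k}\cdots l^\sharp_{j_s+k}\xi,
\]
so the summands are pairwise orthogonal simply because their first tensor factors $e_{i_1+k}$ are.  Pythagoras then gives $\|S_n\xi\|^2=\sum_k\|e_{i_1+k}\otimes\xi_k\|^2=t\sum_k\|\xi_k\|^2\le nt\max\{1,\sqrt t\}^{2(r+s-1)}$.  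No double sum, no contraction bookkeeping.

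Your approach reaches the same place, but you have the adjacency wrong, and this is why you foresee an obstacle that is not there.  In $\t^l(W^*)\t^k(W)$ the operator immediately to the left of $l^\dag_{i_1+k}$ is the \emph{adjoint of the leftmost factor} of $W$, namely $l_{i_1+l}$, not ``$l^\sharp_{j_s+l}$''.  Hence the innermost product is exactly
\[
l_{i_1+l}\,l^\dag_{i_1+k}=\delta_{l,k}\bigl(P_\Om+t(I-P_\Om)\bigr),
\]
and \emph{every} off-diagonal term $k\neq l$ vanishes on the spot.  There is no finite band $|k-l|\le D$ to control and no appeal to near-orthogonality along diagonals; the whole last paragraph of your proposal can be deleted.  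What remains is
\[
S_n^*S_n=\sum_{k=0}^{n-1}\t^k(W')^*\bigl(P_\Om+t(I-P_\Om)\bigr)\t^k(W'),
\]
with $W'$ the word of length $r+s-1$ obtained by stripping the leading creator, and the bound follows.  (A small caveat: this operator inequality yields the prefactor $\max\{1,t\}$ rather than $t$, so for $t<1$ you get $\sqrt n$ instead of $\sqrt{nt}$; the paper's vector argument picks up the honest $t$ from $\|e_{i_1+k}\otimes\xi_k\|^2=t\|\xi_k\|^2$ when $\xi_k\notin\bc\Om$.  For the application to unique ergodicity this discrepancy is of course immaterial.)
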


\begin{proof}
Let $\xi$ be a unit vector in $\ch^{\otimes^m}$. For $m\geq s$, one has
\begin{align*}
\left\|\sum_{k=0}^{n-1} \t^k(l^\dag_{i_1} l^\sharp_{i_2}\cdots l^\sharp_{i_r}l^\sharp_{j_1}\cdots l^\sharp_{j_s})\xi\right\|^2=&
\left\| \sum_{k=0}^{n-1} l^\dag_{i_1+k} l^\sharp_{i_2+k}\cdots l^\sharp_{i_r+k}l^\sharp_{j_1+k}\cdots l^\sharp_{j_s+k}\xi   \right\|^2\\
=& \left\|\sum_{k=0}^{n-1} l^\dag_{i_1+k} \xi_k  \right\|^2
\end{align*}
with $\xi_k:=l^\sharp_{i_2+k}\cdots l^\sharp_{i_r+k}l^\sharp_{j_1+k}\cdots l^\sharp_{j_s+k}\xi $ for $k=0, 1, \ldots, n-1$. Now note that
$\xi_k\in \ch^{\otimes^{m+r-s-1}}$ and $\|\xi_k\|^2\leq \max\{1, \sqrt{t}\}^{2(s+r-1)} $
due to $\|l_i\|\leq \max\{1, \sqrt{t}\}$ for every $i\in\bz$. But then
\begin{align*}
\left\|\sum_{k=0}^{n-1} l^\dag_{i_1+k} \xi_k  \right\|^2=& \bigg\langle \sum_{k=0}^{n-1} l^\dag_{i_1+k} \xi_k , \sum_{k'=0}^{n-1} l^\dag_{i_1+k'} \xi_{k'} \bigg\rangle\\
=& \bigg\langle \sum_{k=0}^{n-1} e_{i_1+k}\otimes \xi_k , \sum_{k'=0}^{n-1} e_{i_1+k'}\otimes \xi_{k'} \bigg\rangle\\
=&\sum_{k=0}^{n-1} \|e_{i_1+k}\otimes \xi_k \|^2= \sum_{k=0}^{n-1}t \|\xi_k\|^2\\
\leq& nt \max\{1, \sqrt{t}\}^{2(s+r-1)}\, .
\end{align*}

\end{proof}

A version of the lemma above tailored to the case of the truncated $t$-free spaces can be proved as well.
For convenience, we single out the relative result below. The proof, though, is left out as it is a very minor variation of the proof
above.

\begin{cor}\label{estimatetrunc}
For every $t>0$ and every integer $m\geq 1$, one has 
$$\left\|\sum_{k=0}^{n-1} \t^k(a^\dag_{i_1}\cdots a^\dag_{i_r}a_{j_1}\cdots a_{j_s})\right\|\leq \sqrt{nt}\max\{1, \sqrt{t}\}^{2m-1}$$
for all  $r, s\in\bn$ and $i_1, i_2, \ldots i_r, j_1, j_2, \ldots, j_s\in\bz$.
\end{cor}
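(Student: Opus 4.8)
The plan is to adapt the argument of Lemma \ref{estimatefull} to the truncated setting, where the only substantive change is that the Fock space is finite in depth, so a given monomial acts on at most $m+1$ graded pieces and we must control how the shifted summands interact across these finitely many levels. First I would fix a unit vector $\xi\in\cf_t^{(m)}(\ch)$ and decompose it according to the grading $\xi=\sum_{h=0}^m P_{\ch^{\otimes^h}}\xi$. Since $\|a_i^{(m)}\|=\|P_m a_i P_m\|\leq \|a_i\|\leq\max\{1,\sqrt t\}$ for each $i$ (the truncated annihilator is a compression of the full one), each monomial $a^\dag_{i_1}\cdots a^\dag_{i_r}a_{j_1}\cdots a_{j_s}$ has norm at most $\max\{1,\sqrt t\}^{r+s}$, and more importantly it shifts the grading by a fixed amount $r-s$, mapping $\ch^{\otimes^h}$ into $\ch^{\otimes^{h+r-s}}$ (understood as zero whenever the target index falls outside $\{0,1,\dots,m\}$).

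The key step is the orthogonality computation that produced the crucial collapse $\|\sum_k e_{i_1+k}\otimes\xi_k\|^2=\sum_k\|e_{i_1+k}\otimes\xi_k\|^2$ in the previous lemma. There, distinct shift indices $k\neq k'$ force the leftmost creation indices $i_1+k$ and $i_1+k'$ to differ, so the vectors $e_{i_1+k}\otimes\xi_k$ are mutually orthogonal and the cross terms vanish. I would reproduce exactly this: writing $\xi_k$ for the result of applying the shifted inner part $\t^k(a^\sharp_{i_2}\cdots a_{j_s})$ to $\xi$, the outermost operator is a creator $a^\dag_{i_1+k}$, which prepends $e_{i_1+k}$ at the leftmost tensor slot (up to the grading truncation) and multiplies the norm by $\sqrt t$. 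Because the leftmost letters $e_{i_1+k}$ are pairwise distinct for distinct $k$, the cross terms again vanish and the squared norm becomes $\sum_{k=0}^{n-1} t\,\|\xi_k\|^2$. Bounding each $\|\xi_k\|^2$ by $\max\{1,\sqrt t\}^{2(r+s-1)}$ via the norm estimate on the inner operators, and using $r+s-1\leq 2m-1$ since the number of nontrivial letters that can act nontrivially on a depth-$\leq m$ space is capped by $2m-1$, I obtain the stated bound $\sqrt{nt}\,\max\{1,\sqrt t\}^{2m-1}$. Taking the supremum over unit vectors $\xi$ gives the operator-norm estimate.

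The one genuine subtlety, and the point I would expect to be the main obstacle, is the truncation: unlike the full-Fock case, a creator acting near the top grading level $\ch^{\otimes^m}$ is annihilated by $P_m$, so the clean identity $\|a^\dag_{i_1+k}\xi_k\|^2=t\|\xi_k\|^2$ only holds for components of $\xi_k$ living in levels $\leq m-1$; the top-level component is simply killed. This can only decrease the norm, so the orthogonality-plus-summation estimate survives as an inequality rather than an equality, which is all we need. A secondary point is justifying that $r+s-1$ may be replaced by the uniform exponent $2m-1$: any monomial whose length exceeds what the truncated space can support acts as zero, so we may assume $r,s\leq m$, whence $r+s-1\leq 2m-1$ and the exponent is uniform in the indices. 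With these two observations recorded, the computation is otherwise a verbatim transcription of the previous proof, which is precisely why the authors omit it; my write-up would make the two truncation adjustments explicit and then refer back to Lemma \ref{estimatefull} for the remaining algebra.
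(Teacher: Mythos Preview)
Your proposal is correct and follows precisely the approach the paper intends: the authors explicitly omit the proof, calling it ``a very minor variation'' of Lemma~\ref{estimatefull}, and the two adjustments you single out---that truncation at the top level can only decrease the norm of $a^\dag_{i_1+k}\xi_k$, and that $r,s\le m$ (else the Wick monomial vanishes on $\cf_t^{(m)}(\ch)$), whence $r+s-1\le 2m-1$---are exactly the modifications required to pass from the full to the truncated setting. One small caveat: your claim that the leftmost creator ``multiplies the norm by $\sqrt{t}$'' is only literally true when $\xi_k$ has no $\Om$-component (on $\Om$ the factor is $1$), but this is the same imprecision present in the paper's own proof of Lemma~\ref{estimatefull} and is harmless for the intended application, where only the $\sqrt{n}$ growth rate matters.
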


With the above lemmas at hand, we are ready
to prove that the shift is uniquely ergodic with respect to its fixed-point subalgebra. For completeness, we recall that unique ergodicity w.r.t. the fixed-point subalgebra is a generalization of unique ergodicity due to
Abadie and Dykema \cite{AD}.  Before going on, let us recall a few definitions. By a $C^*$-dynamical system we mean a pair $(\ga, \Phi)$, where $\ga$ is a $C^*$-algebra and $\Phi$ a $*$-automorphism. The fixed-point subalgebra of $(\ga, \Phi)$ is the $C^*$-subalgebra $\ga^\Phi:=\{a\in \ga : \Phi(a)=a\}$. A $C^*$-dynamical system is said to be uniquely ergodic
w.r.t. the fixed-point subalgebra when for every $a\in\ga$ the Ces\`{a}ro averages $\frac{1}{n}\sum_{k=0}^{n-1}\Phi^k(a)$ converge in norm to some 
(fixed) element of $\ga$. Clearly, this notion reduces to unique ergodicity when the fixed-point subalgebra is trivial. Furthermore, in the aforementioned
paper several equivalent conditions  are provided for a dynamical system to be uniquely ergodic w.r.t. the fixed-point subalgebra.
One  particularly relevant to the present work is that a dynamical system enjoys this property if and only if
every state on the fixed-point subalgebra has precisely one invariant extension to the whole $C^*$-algebra, see Theorem 3.2 in
\cite{AD}.

\begin{prop}\label{uniquerg}
For every $t>0$ and integer $m\geq 1$, the $C^*$-dynamical system $( \ga^{(m)}_t, \tau)$ is uniquely
ergodic w.r.t the fixed-point subalgebra.
\end{prop}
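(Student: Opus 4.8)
The plan is to establish the defining property directly, namely that for every $a\in\ga^{(m)}_t$ the Ces\`aro averages converge in norm. Writing $M_n(a):=\frac{1}{n}\sum_{k=0}^{n-1}\tau^k(a)$, each $M_n$ is a contraction because $\tau$ is an isometric $*$-automorphism, so $\{M_n\}$ is uniformly bounded; by the usual $\eps/3$ argument it is then enough to prove that $M_n(w)$ converges for $w$ in a set whose linear span is dense. By Lemma \ref{wick} the algebra $\mathcal{A}^{(m)}_t$, which is dense in $\ga^{(m)}_t$, is contained in the linear span of the monomials $a^\dag_{i_1}\cdots a^\dag_{i_r}a_{j_1}\cdots a_{j_l}P_{\ch^{\otimes^h}}$, so by linearity of $M_n$ it suffices to treat these monomials one at a time.

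The first step is to record that every $P_{\ch^{\otimes^h}}$ is fixed by $\tau$. This is transparent once one notes that $\tau$ is spatially implemented: let $U$ be the unitary of $\cf_t^{(m)}(\ch)$ determined by $U\Om=\Om$ and $U(e_{j_1}\otimes\cdots\otimes e_{j_k})=e_{j_1+1}\otimes\cdots\otimes e_{j_k+1}$. A direct check from \eqref{creator}--\eqref{ann} gives $Ua^{(m)}_iU^*=a^{(m)}_{i+1}$, so that $\tau=\mathrm{Ad}(U)$ on the generators and hence on all of $\ga^{(m)}_t$; since $U$ leaves each $\ch^{\otimes^h}$ invariant it commutes with $P_{\ch^{\otimes^h}}$, whence $\tau(P_{\ch^{\otimes^h}})=P_{\ch^{\otimes^h}}$. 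Consequently $M_n(wP_{\ch^{\otimes^h}})=M_n(w)\,P_{\ch^{\otimes^h}}$ for $w=a^\dag_{i_1}\cdots a^\dag_{i_r}a_{j_1}\cdots a_{j_l}$, and therefore $\|M_n(wP_{\ch^{\otimes^h}})\|\leq\|M_n(w)\|$.

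It remains to examine $M_n(w)$. If $r+l\geq 1$ I claim $\|M_n(w)\|\to 0$. For $r\geq 1$ (with $l\geq 0$ arbitrary) this is immediate from Corollary \ref{estimatetrunc}, which yields $\|M_n(w)\|\leq n^{-1/2}\sqrt{t}\,\max\{1,\sqrt{t}\}^{2m-1}\to 0$; the remaining case $r=0$, $l\geq1$ reduces to the previous one upon passing to the adjoint $w^*=a^\dag_{j_l}\cdots a^\dag_{j_1}$, using $M_n(w)=(M_n(w^*))^*$ and hence $\|M_n(w)\|=\|M_n(w^*)\|$. Thus every monomial carrying at least one creation or annihilation operator has Ces\`aro average tending to $0$. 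The only monomials left are those with $r=l=0$, i.e.\ the projections $P_{\ch^{\otimes^h}}$ themselves, for which $M_n(P_{\ch^{\otimes^h}})=P_{\ch^{\otimes^h}}$ is constant. Hence $M_n$ converges on every spanning monomial; the limit of a general $w\in\mathcal{A}^{(m)}_t$ is the $\tau$-invariant element obtained by keeping only the terms proportional to some $P_{\ch^{\otimes^h}}$, and the density and contractivity reduction extends the convergence to all of $\ga^{(m)}_t$. This is exactly unique ergodicity with respect to the fixed-point subalgebra.

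The technical heart of the argument is already packaged in Corollary \ref{estimatetrunc} and Lemma \ref{wick}, so I expect the main thing to be careful about is the bookkeeping: one must make sure that the Wick-ordered decomposition genuinely separates the $\tau$-invariant span of the $P_{\ch^{\otimes^h}}$, on which the averages are stationary, from all other monomials, on which they decay like $n^{-1/2}$, and in particular that the purely annihilating words, not literally covered by the stated form of Corollary \ref{estimatetrunc}, are handled by the adjoint trick. Establishing the spatial implementation of $\tau$, and with it the invariance of the projections, is the one genuinely new input, but it is routine.
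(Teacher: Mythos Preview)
Your proof is correct and follows essentially the same route as the paper's: reduce by density and contractivity to the Wick-ordered monomials of Lemma \ref{wick}, observe that the projections are $\tau$-fixed, and use Corollary \ref{estimatetrunc} to kill the nontrivial words. You are in fact more careful than the paper on two points it leaves implicit---the $\tau$-invariance of the $P_{\ch^{\otimes^h}}$ (which you justify via the spatial implementation) and the pure-annihilator case $r=0$ (handled by your adjoint trick)---but the overall strategy is identical.
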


\begin{proof}
We will prove the statement by showing that, for every $a\in \ga^{(m)}_t$, the Ces\`{a}ro averages
$\frac{1}{n}\sum_{k=0}^{n-1}\tau^k(a)$ converge in norm.\\
Since  $\big\|\frac{1}{n}\sum_{k=0}^{n-1}\t^k\big\|\leq 1$ for every $n\in\bn$, thanks to Lemma \ref{wick} it is enough
to prove convergence only for elements of the form
$$a^\dag_{i_1}\cdots a^\dag_{i_r}a_{j_1}\cdots a_{j_l}P_{\ch^{\otimes^h}}$$
for $r, l\in\bn$, $i_1, \ldots, i_r, j_1, \ldots, j_l\in\bz$, and $h=0, 1, \ldots m$.\\
Furthermore, only  words with $r+l\geq1$ need to be taken care of.
For words of this type, the sequence  $\frac{1}{n}\sum_{k=0}^{n-1}\tau^k(a^\dag_{i_1}\cdots a^\dag_{i_r}a_{j_1}\cdots a_{j_l}P_{\ch^{\otimes^h}})$
tends to $0$ in norm by Corollary \ref{estimatetrunc}.\\
\end{proof}

Unique ergodicity can now be taken advantage of to determine what the fixed-point subalgebra is like.

\begin{prop}\label{fixedpoint}
For every $t>0$ and every integer  $m\geq 1$, the subalgebra of $\t$-invariant elements of $\ga^{(m)}_t$
is ${\rm span}\{P_\Om, P_{\ch}, \ldots, P_{\ch^{\otimes^m}}\}$.
\end{prop}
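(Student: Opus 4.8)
The plan is to deduce everything from the unique ergodicity proved in Proposition \ref{uniquerg}. Since the Ces\`aro averages $\frac1n\sum_{k=0}^{n-1}\tau^k(a)$ converge in norm for every $a\in\ga^{(m)}_t$, they define a norm-contractive linear map $E\colon\ga^{(m)}_t\to\ga^{(m)}_t$ by $E(a):=\lim_n\frac1n\sum_{k=0}^{n-1}\tau^k(a)$. A one-line telescoping estimate (the boundary term $\frac1n(\tau^n(a)-a)$ has norm at most $\frac2n\|a\|$) shows that $E(a)$ is $\tau$-invariant for every $a$, while $E(a)=a$ as soon as $\tau(a)=a$; hence $E$ is a projection whose range is exactly $(\ga^{(m)}_t)^\tau$. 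The task thus becomes the identification of this range with $\spn\{P_{\ch^{\otimes^h}}:0\le h\le m\}$.

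First I would settle the inclusion $\spn\{P_{\ch^{\otimes^h}}:0\le h\le m\}\subseteq(\ga^{(m)}_t)^\tau$. By Lemma \ref{wick} each $P_{\ch^{\otimes^h}}$ lies in $\ga^{(m)}_t$, so it only remains to see that it is fixed by $\tau$. For this I would implement the shift spatially: the bilateral shift $S\colon e_i\mapsto e_{i+1}$ is unitary on $\ch=\ell^2(\bz)$, and the operator $U$ acting as $S^{\otimes h}$ on each $\ch^{\otimes^h}$ and fixing $\Om$ is unitary on $\cf_t^{(m)}(\ch)$, because $S^{\otimes h}$ preserves the inner product of $\ch^{\otimes^h}$ and $U$ leaves every particle-number subspace invariant. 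Checking on basis vectors gives $Ua_i^{(m)}U^{\ast}=a_{i+1}^{(m)}$, so $\tau=\mathrm{Ad}(U)$ on $\ga^{(m)}_t$; since $U$ commutes with every $P_{\ch^{\otimes^h}}$, these projections are $\tau$-invariant, which proves the inclusion.

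For the reverse inclusion I would compute $E$ on the dense subalgebra $\mathcal{A}^{(m)}_t$. By Lemma \ref{wick} it suffices to evaluate $E$ on the monomials $a^\dag_{i_1}\cdots a^\dag_{i_r}a_{j_1}\cdots a_{j_l}P_{\ch^{\otimes^h}}$. Each $P_{\ch^{\otimes^h}}$ being $\tau$-fixed, $\tau^k$ acts on such a monomial only through the word $a^\dag_{i_1}\cdots a_{j_l}$, so for $r+l\ge1$ Corollary \ref{estimatetrunc} bounds $\big\|\sum_{k=0}^{n-1}\tau^k(\cdot)\big\|$ by $\sqrt{nt}\,\max\{1,\sqrt t\}^{2m-1}$ and the corresponding Ces\`aro average tends to $0$, so $E$ kills these monomials; the remaining ones, with $r=l=0$, are the fixed projections themselves. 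Hence $E$ maps $\mathcal{A}^{(m)}_t$ into $\spn\{P_{\ch^{\otimes^h}}:0\le h\le m\}$, a finite-dimensional, hence norm-closed, subspace. By continuity of $E$ and density of $\mathcal{A}^{(m)}_t$ we get $(\ga^{(m)}_t)^\tau=E(\ga^{(m)}_t)\subseteq\overline{E(\mathcal{A}^{(m)}_t)}\subseteq\spn\{P_{\ch^{\otimes^h}}:0\le h\le m\}$, and the two inclusions together give the claim.

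The step I expect to demand the most care is the bookkeeping in the reverse inclusion around the top projection $P_{\ch^{\otimes^m}}$. The reduction of Lemma \ref{wick} naturally displays the diagonal part as the projections $P_{\ch^{\otimes^h}}$ together with the unit $I=\sum_{h=0}^m P_{\ch^{\otimes^h}}$ of the (unital) algebra $\ga^{(m)}_t$, and one has to keep track of the identity component in order to recover $P_{\ch^{\otimes^m}}=I-\sum_{h<m}P_{\ch^{\otimes^h}}$ as a genuine element of the range of $E$. Once $E$ is recognized as a norm-continuous projection onto the fixed-point algebra this is harmless, since the target span is finite-dimensional and the closure is automatic; the only analytic input, namely the vanishing of the Ces\`aro means of the non-diagonal words, has already been supplied by Corollary \ref{estimatetrunc}.
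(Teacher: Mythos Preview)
Your proposal is correct and follows essentially the same route as the paper: define the conditional expectation $E$ as the norm limit of the Ces\`aro averages (from Proposition~\ref{uniquerg}), then read off its range by applying the reduction of Lemma~\ref{wick} together with the vanishing of the averages on non-diagonal words (Corollary~\ref{estimatetrunc}). The paper compresses all of this into two sentences, simply pointing back to ``the argument employed in the proof of the proposition above''; you spell out the details it leaves implicit, in particular the spatial implementation $\tau=\mathrm{Ad}(U)$ showing that each $P_{\ch^{\otimes^h}}$ is $\tau$-fixed, and the bookkeeping that recovers $P_{\ch^{\otimes^m}}$ via $I-\sum_{h<m}P_{\ch^{\otimes^h}}$, since the monomials in Lemma~\ref{wick} only carry $P_{\ch^{\otimes^h}}$ with $h\le m-1$.
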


\begin{proof}
Thanks to Proposition \ref{uniquerg},  the formula $\displaystyle {E:=\lim_{n\rightarrow\infty}\frac{1}{n}\sum_{k=0}^{n-1}\t^k}$ defines a conditional expectation onto the fixed-point subalgebra.
Now the argument employed in the proof of the proposition above shows that the latter is linearly spanned by the projections $P_{\ch^{\otimes^h}}$, $h=0, 1, \ldots, m$.
\end{proof}

We are now in a position to describe the structure of the $*$-weakly compact convex set of all shift-invariant states on $\ga^{(m)}_t$, which we denote by $\cs_{\bz}(\ga^{(m)}_t)$.

\begin{prop}\label{simplex}
For every $t>0$ and every  $m\geq 1$, $\cs_{\bz}(\ga^{(m)}_t)$ is a finite-dimensional
(Choquet) simplex with $m+1$ extreme states $\om_i$, $i=0, 1, \ldots, m$, uniquely determined by
$\om_i(P_{\ch^{\otimes^j}})=\delta_{i, j}$, for $i,j=0, 1, \ldots, m$.
\end{prop}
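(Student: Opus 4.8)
The plan is to transfer the entire affine-topological structure of the problem onto the state space of the fixed-point subalgebra, which Proposition \ref{fixedpoint} shows to be as simple as possible. Denote by $\ga^\t:=\{a\in\ga^{(m)}_t:\t(a)=a\}$ the fixed-point subalgebra and by $E=\lim_{n\to\infty}\frac1n\sum_{k=0}^{n-1}\t^k$ the conditional expectation onto it, whose existence is guaranteed by Proposition \ref{uniquerg}. By Theorem 3.2 in \cite{AD}, unique ergodicity w.r.t.\ the fixed-point subalgebra (Proposition \ref{uniquerg}) tells us that every state on $\ga^\t$ has exactly one $\t$-invariant extension to $\ga^{(m)}_t$, and this extension is precisely $\phi\mapsto\phi\circ E$. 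I would therefore study the restriction map
$$R:\cs_\bz(\ga^{(m)}_t)\longrightarrow \cs(\ga^\t)\,,\qquad R(\om)=\om\lceil_{\ga^\t}\,,$$
together with its candidate inverse $\phi\mapsto\phi\circ E$.

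First I would verify that $R$ is an affine homeomorphism onto the whole state space $\cs(\ga^\t)$. Affinity is clear; surjectivity follows because $\phi\circ E$ is an invariant extension of $\phi$; injectivity is exactly the uniqueness half of the invariant-extension statement. Both maps are weak-$*$ continuous: for fixed $b\in\ga^\t$ the functional $\om\mapsto\om(b)$ is continuous, and for fixed $a\in\ga^{(m)}_t$ the functional $\phi\mapsto\phi(E(a))$ is continuous since $E(a)\in\ga^\t$. Hence $\cs_\bz(\ga^{(m)}_t)$ and $\cs(\ga^\t)$ are affinely homeomorphic.

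It then remains to identify $\cs(\ga^\t)$. By Proposition \ref{fixedpoint} one has $\ga^\t=\spn\{P_\Om,P_\ch,\ldots,P_{\ch^{\otimes^m}}\}$, and the $m+1$ projections $P_{\ch^{\otimes^h}}$ are pairwise orthogonal with $\sum_{h=0}^m P_{\ch^{\otimes^h}}=I$ on $\cf_t^{(m)}(\ch)$; thus $\ga^\t$ is $*$-isomorphic to $\bc^{m+1}$. A state $\phi$ on $\ga^\t$ is then completely determined by the numbers $p_h=\phi(P_{\ch^{\otimes^h}})$, which range exactly over $p_h\ge 0$ with $\sum_{h=0}^m p_h=1$. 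Therefore $\cs(\ga^\t)$ is the standard finite-dimensional simplex with $m+1$ vertices, a (Choquet) simplex whose extreme points are the $\phi_i$ determined by $\phi_i(P_{\ch^{\otimes^j}})=\delta_{i,j}$. Pulling this back through $R^{-1}$, I conclude that $\cs_\bz(\ga^{(m)}_t)$ is a finite-dimensional Choquet simplex with exactly $m+1$ extreme states $\om_i:=\phi_i\circ E$, $i=0,1,\ldots,m$; since $E$ fixes each $P_{\ch^{\otimes^j}}\in\ga^\t$, these satisfy $\om_i(P_{\ch^{\otimes^j}})=\phi_i(P_{\ch^{\otimes^j}})=\delta_{i,j}$, as claimed.

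The real content has already been extracted in Propositions \ref{uniquerg} and \ref{fixedpoint}, so the only step needing genuine care is checking that $R$ is a bicontinuous affine bijection: this is what ensures that the simplex structure itself --- and not merely the number of extreme points --- is faithfully transported from $\cs(\ga^\t)$ to $\cs_\bz(\ga^{(m)}_t)$.
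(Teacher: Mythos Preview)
Your proof is correct and follows essentially the same route as the paper: both use that the restriction map $\om\mapsto\om\lceil_{(\ga^{(m)}_t)^\tau}$ is an affine homeomorphism onto $\cs((\ga^{(m)}_t)^\tau)$ (a consequence of unique ergodicity w.r.t.\ the fixed-point subalgebra), and then invoke Proposition~\ref{fixedpoint} to identify the latter with the state space of $\bc^{m+1}$. The paper's version is terser, simply asserting the affine homeomorphism, whereas you spell out the inverse $\phi\mapsto\phi\circ E$ and the continuity in both directions.
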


\begin{proof}
By unique ergodicity w.r.t. the fixed-point subalgebra, $(\ga^{(m)}_t)^\tau$, the restriction map
$\cs_{\bz}({\ga^{(m)}_t})\ni\om\rightarrow \om\lceil_{(\ga^{(m)}_t)^\tau}\in\cs((\ga^{(m)}_t)^\tau)$
is an affine homeomorphism between compact convex sets. The conclusion now follows from
Proposition \ref{fixedpoint} because $(\ga^{(m)}_t)^\tau\cong \bc^{m+1}$ (as $C^*$-algebras).

\end{proof}

The case of the full $t$-free Fock space looks different insofar as the fixed-point subalgebra is two-dimensional for $t\neq 1$.
\begin{prop}\label{fullue}
For every $t>0$, $(\ga_t, \tau)$ is uniquely ergodic w.r.t. the fixed-point subalgebra.
Moreover, for $t=1$ the fixed-point  subalgebra is trivial, whereas it is
${\rm span}\{P_\Om, I-P_\Om\}$ for $t\neq 1$.
\end{prop}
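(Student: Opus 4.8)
In outline, the plan is to run the scheme of Proposition \ref{uniquerg} on the full $t$-free Fock space, the only genuinely new feature being a rank-one dispersion estimate for the monomials that carry a vacuum projection. First I would record the full-space analogue of the first line of \eqref{commproj}: since there is no truncation, for all $i,j\in\bz$ one has
$$l_i l_j^\dag = \delta_{i,j}\big(P_\Om + t(I-P_\Om)\big)=\delta_{i,j}\big(tI+(1-t)P_\Om\big),\qquad P_\Om l_i^\dag=0,\qquad l_i P_\Om=0,$$
where $l_i=(l_i^\dag)^*$. In particular, for $t\neq 1$ the vacuum projection is recovered as $P_\Om=(1-t)^{-1}(l_1 l_1^\dag-tI)\in\ga_t$, and it is $\t$-invariant because $\t$ is spatially implemented by the second quantization $\G(U)$ of the bilateral shift $U$ on $\ch$, which fixes $\Om$. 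When $t=1$ the relation degenerates to $l_i l_j^\dag=\delta_{i,j}I$ and no vacuum projection is produced.

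Next I would normal-order. Using the relation above to move every annihilator to the right of every creator, each element of the dense $*$-subalgebra $\mathcal{A}_t$ generated by the $l_i^\dag$ becomes a finite linear combination of $I$ and of the two families
$$l^\dag_{i_1}\cdots l^\dag_{i_r}\, l_{j_1}\cdots l_{j_s}\qquad\text{and}\qquad l^\dag_{i_1}\cdots l^\dag_{i_r}\, P_\Om\, l_{j_1}\cdots l_{j_s},$$
with $r+s\geq 1$, the second family occurring only for $t\neq 1$. Indeed, whenever a reduction creates a $P_\Om$, the identities $P_\Om l_i^\dag=0$ and $l_i P_\Om=0$ annihilate every resulting configuration except the one in which a single surviving $P_\Om$ is sandwiched between the block of creators and the block of annihilators (only one survives since $P_\Om^2=P_\Om$). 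This is the exact counterpart of the second part of Lemma \ref{wick}.

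I would then compute the Ces\`aro averages on each monomial type. Words of the first family with $r+s\geq 1$ average to $0$ in norm by Lemma \ref{estimatefull}. For the second family the key observation, and the one new ingredient here, is that $l^\dag_{i_1}\cdots l^\dag_{i_r}P_\Om l_{j_1}\cdots l_{j_s}$ is a rank-one operator $|\xi\rangle\langle\eta|$ (up to a power of $t$), with $\xi=e_{i_1}\otimes\cdots\otimes e_{i_r}$ and $\eta=e_{j_s}\otimes\cdots\otimes e_{j_1}$ (either leg being $\Om$ if its block is empty). Since $\t^k$ acts as $\G(U)^k$, one has $\t^k(|\xi\rangle\langle\eta|)=|\xi^{(k)}\rangle\langle\eta^{(k)}|$ with both legs shifted by $k$; because $r+s\geq 1$, at least one of the families $\{\xi^{(k)}\}_k$, $\{\eta^{(k)}\}_k$ is orthogonal (up to the constant norm factor), so a direct estimate of $\|S_n^*S_n\|$ or $\|S_nS_n^*\|$ for the partial sum $S_n=\frac1n\sum_{k=0}^{n-1}\t^k(|\xi\rangle\langle\eta|)$ via Bessel's inequality gives $\|S_n\|=O(n^{-1/2})\to 0$. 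As $I$ and $P_\Om$ are fixed by $\t$, the Ces\`aro averages converge in norm on all of $\mathcal{A}_t$; by $\big\|\frac1n\sum_{k=0}^{n-1}\t^k\big\|\leq 1$ and density of $\mathcal{A}_t$ the convergence extends to $\ga_t$, which is precisely unique ergodicity w.r.t.\ the fixed-point subalgebra.

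Finally, exactly as in Proposition \ref{fixedpoint}, the norm limit $E:=\lim_n\frac1n\sum_{k=0}^{n-1}\t^k$ is a conditional expectation whose range is $(\ga_t)^\t$. The computation above shows $E$ maps $\mathcal{A}_t$, hence by continuity all of $\ga_t$, into $\spn\{I,P_\Om\}$ for $t\neq 1$ and into $\bc I$ for $t=1$. For $t\neq 1$ both $P_\Om$ and $I$ lie in $(\ga_t)^\t$, whence $(\ga_t)^\t=\spn\{I,P_\Om\}=\spn\{P_\Om, I-P_\Om\}$; for $t=1$ we obtain $(\ga_1)^\t=\bc I$, i.e.\ the fixed-point subalgebra is trivial. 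I expect the main obstacle to be the dispersion estimate for the mixed $P_\Om$-monomials: these rank-one terms are not covered by Lemma \ref{estimatefull} and are the one place where the full case departs from the truncated analysis of Proposition \ref{uniquerg}.
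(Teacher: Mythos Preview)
Your argument is correct and follows the same overall scheme as the paper: normal--order using $l_i l_j^\dag=\delta_{i,j}\big(tI+(1-t)P_\Om\big)$, show that the Ces\`aro averages of the non--scalar monomials vanish in norm, and read off the fixed--point subalgebra as the range of $E=\lim_n\frac{1}{n}\sum_{k=0}^{n-1}\tau^k$.

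The one place where your route differs is the treatment of the $P_\Om$--carrying monomials $l^\dag_{i_1}\cdots l^\dag_{i_r}P_\Om l_{j_1}\cdots l_{j_s}$. You single these out as rank--one operators and run a separate Bessel estimate. The paper instead lists only the types $w$, $wP_\Om$ and their adjoints (with $w$ a word starting with a creator or ending in an annihilator) and then simply invokes Lemma~\ref{estimatefull}: since $P_\Om$ is $\tau$--invariant, $\sum_k\tau^k(wP_\Om)=\big(\sum_k\tau^k(w)\big)P_\Om$, and the bound for $w$ carries over. Strictly speaking, the mixed terms $w_1P_\Om w_2^*$ with both blocks nonempty are not on the paper's list (they do occur, e.g.\ in $l^\dag_1 l_2 l^\dag_2 l_3$), so your explicit rank--one argument is the more complete of the two; but the proof of Lemma~\ref{estimatefull} goes through verbatim for such terms as well (the leftmost $l^\dag_{i_1+k}$ still produces orthogonal first tensor legs), so the discrepancy is cosmetic. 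Either way one arrives at $(\ga_t)^\tau=\mathrm{span}\{P_\Om,I-P_\Om\}$ for $t\neq1$ and $\bc I$ for $t=1$.
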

\begin{proof}
We need only deal with $t\neq 1$ since the free case, {\it i.e.} $t=1$, is known, see \cite[Theorem 3.3]{DyF}.
From the relations
$l_iP_\Om=0$ (hence $l_i(I-P_\Om)= l_i$) and
$l_i l^\dag_j=\delta_{i, j}(P_\Om+t (I-P_\Om))$, $i, j\in\bz$, one sees that any word in the 
$t$-free creators and annihilators can be written as a linear combinations of 
$P_\Om, (I-P_\Om), w, wP_\Om$ and their adjoints, where $w$ is a word (possibly of length $1$) either starting with a creator
or ending in an annihilator. Applying Lemma \ref{estimatefull} to $w$, one finally finds
that the range of the conditional expectation onto the fixed-point subalgebra 
is spanned by $P_\Om$ and $I-P_\Om$.
\end{proof}

\begin{rem}
As an immediate consequence of Proposition \ref{fullue}, we have that $\cs_{\bz}(\ga_t)$ has exactly two extreme states when $t\neq 1$.
\end{rem}

The rest of the section is devoted to the $C^*$-subalgebra generated by the position operators, that is
$\mathfrak{R}^{(m)}_t$. In particular, we are going to show that 
$\mathfrak{R}^{(m)}_t$ is actually the same as $\ga^{(m)}_t$. To this end, we start by ascertaining that all projections
onto the $k$-particle spaces lie in $\mathfrak{R}^{(m)}_t$.
\begin{lem}\label{proj}
For every $t>0$ and for every integer $m\geq 1$, the set of projections $\{P_{\ch^{\otimes^h}}: h=0, 1, \ldots, m\}$ sits in 
$\mathfrak{R}^{(m)}_t$. 
\end{lem}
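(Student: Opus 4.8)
The plan is to exhibit, inside $\gar^{(m)}_t$, enough operators that are \emph{diagonal} with respect to the particle grading and then to recover each $P_{\ch^{\otimes^h}}$ from them by functional calculus. The difficulty is that the norm-closed position algebra contains no grading-diagonal operator in any obvious way: the natural candidates (a number operator $\sum_i a_i^\dag a_i$ and the like) are only \emph{strong} limits of finite sums of monomials and need not lie in $\gar^{(m)}_t$. The device that produces diagonal elements honestly inside the algebra is ergodic averaging. Since $x^{(m)}_i=a^{(m)}_i+(a^{(m)}_i)^\dag$, one has $\gar^{(m)}_t\subseteq\ga^{(m)}_t$, and $\gar^{(m)}_t$ is norm-closed and $\t$-invariant. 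Hence, by Proposition \ref{uniquerg}, for every $a\in\gar^{(m)}_t$ the Ces\`aro averages $\frac1n\sum_{k=0}^{n-1}\t^k(a)$ converge in norm; their limit $\Phi(a)$ lies again in $\gar^{(m)}_t$ and, being $\t$-invariant, in the fixed-point algebra $\mathrm{span}\{P_\Om,\dots,P_{\ch^{\otimes^m}}\}$ identified in Proposition \ref{fixedpoint}. Thus the conditional expectation $\Phi:=\lim_n\frac1n\sum_{k=0}^{n-1}\t^k$ maps $\gar^{(m)}_t$ into $\gar^{(m)}_t\cap\mathrm{span}\{P_{\ch^{\otimes^h}}\}$, and every $\Phi(a)$ is automatically a grading-diagonal element of the position algebra.

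First I would apply $\Phi$ to the elements $(x^{(m)}_1)^{2k}$, $k\geq 1$, obtaining diagonal operators $D_k:=\Phi\big((x^{(m)}_1)^{2k}\big)=\sum_{h=0}^m c_{k,h}\,P_{\ch^{\otimes^h}}\in\gar^{(m)}_t$, whose coefficients are $c_{k,h}=\lim_n\frac1n\sum_j\|(x^{(m)}_j)^{k}\xi\|^2$ for a unit vector $\xi\in\ch^{\otimes^h}$. The only terms surviving the average are those in which no \emph{original} letter of $\xi$ is ever annihilated (annihilating a fixed letter forces $j$ to equal a single index, a vanishing fraction); the surviving contributions are exactly the closed walks of length $2k$ performed on the levels $\{h,\dots,m\}$ by building up and tearing down a tower of fresh letters $e_j$ over $\xi$. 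For $k=1$ this yields, via the first of Relations \eqref{commproj},
$$D_1=\Phi\big((x^{(m)}_1)^2\big)=a^{(m)}_1(a^{(m)}_1)^\dag=P_\Om+t\sum_{h=1}^{m-1}P_{\ch^{\otimes^h}}\,,$$
and in general $c_{k,h}=\langle M_h^{2k}\delta_h,\delta_h\rangle$, where $M_h$ is the compression of $x^{(m)}_1$ to the cyclic tower over a level-$h$ vector, an irreducible finite Jacobi matrix with zero diagonal and off-diagonal squares equal to the Jacobi parameter $t$ (respectively to the boundary value coming from $\b_1=1$ when $h=0$). This $M_h$ is precisely the compression appearing in the proof of Theorem \ref{spectre}.

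The key point is that the $D_k$ \emph{separate the levels} $h=0,\dots,m-1$. Since each $M_h$ is irreducible, the spectral measure of its bottom vector has full support, so $\lim_k c_{k,h}^{1/2k}=\rho_h$, the spectral radius of $M_h$; for $1\le h\le m-1$ one has $\rho_h=2\sqrt t\cos\big(\tfrac{\pi}{m-h+2}\big)$, the largest point of $\sqrt t\,\mathrm{supp}\,\mu_1^{(m-h)}$. Moreover $M_{h+1}$ is the principal submatrix of $M_h$ obtained by deleting its bottom site, so strict Cauchy interlacing gives $\rho_0>\rho_1>\cdots>\rho_{m-1}>\rho_m=0$. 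Consequently the commuting family $\{D_k\}_{k\ge1}$ takes pairwise distinct (and nonzero) joint values on the levels $0,\dots,m-1$, while vanishing identically on level $m$. Hence the abelian $C^*$-subalgebra of $\gar^{(m)}_t$ generated by $\{D_k\}_{k\ge1}$ contains each minimal projection $P_{\ch^{\otimes^h}}$, $h=0,\dots,m-1$, by functional calculus, and finally $P_{\ch^{\otimes^m}}=I-\sum_{h=0}^{m-1}P_{\ch^{\otimes^h}}\in\gar^{(m)}_t$, using that $\gar^{(m)}_t$ is unital.

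I expect the main obstacle to be exactly the separation step: showing that finitely many averaged moments distinguish all $m+1$ grading levels. The walk/Jacobi-matrix description reduces this to the strict monotonicity of the radii $\rho_h$, which is clean, but it also exposes the two structural subtleties to watch. The averaging kills every contribution to the top level $\ch^{\otimes^m}$ (there only annihilation is available, so any surviving walk would have to annihilate an original letter); hence unitality of $\gar^{(m)}_t$ is genuinely needed to capture $P_{\ch^{\otimes^m}}$. And the boundary value $\b_1=1$ makes the tower based at $h=0$ differ from the others, a point that must be handled carefully so as not to break the strict inequality $\rho_0>\rho_1$.
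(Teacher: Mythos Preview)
Your argument is correct, and it shares the crucial opening move with the paper's proof: use Proposition \ref{uniquerg} to upgrade the Ces\`{a}ro averages of powers of position operators from mere strong limits to honest norm limits living inside $\gar^{(m)}_t$, thereby manufacturing grading-diagonal elements in the position algebra.

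After that point the two proofs diverge. The paper uses only the single element
\[
D_1=\Phi\big((x^{(m)}_1)^2\big)=P_\Om+t\sum_{h=1}^{m-1}P_{\ch^{\otimes^h}}\,,
\]
recovers (by functional calculus) the projection $I-P_{\ch^{\otimes^m}}$ as the support projection of $D_1$, compresses $x_i^{(m)}$ by it to obtain a copy of $x_i^{(m-1)}$ inside $\gar^{(m)}_t$, and finishes by induction on $m$ exactly as in Lemma \ref{wick}. Your route avoids induction entirely by producing \emph{all} averaged even moments $D_\ell=\Phi\big((x^{(m)}_1)^{2\ell}\big)$ and separating the levels $0,\dots,m$ in one stroke via the strict monotonicity $\rho_0>\rho_1>\cdots>\rho_m=0$ of the spectral radii of the tower Jacobi matrices $M_h$ (Cauchy interlacing). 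The paper's argument is considerably shorter and needs no analysis of the moment sequences; yours is more self-contained (no recycling of the compression trick from Lemma \ref{wick}) and produces, as a by-product, the explicit identification $c_{\ell,h}=\langle M_h^{2\ell}\delta_h,\delta_h\rangle$ of all diagonal coefficients.

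Both approaches ultimately rely on unitality of $\gar^{(m)}_t$ to capture the top projection $P_{\ch^{\otimes^m}}$, and the paper uses it just as you do.
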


\begin{proof}
An easy adapatation of Lemma 2.2 in \cite{W} shows that the sequence
$$\frac{1}{2n+1}\sum_{k=-n}^n (x_i^{(m)})^2$$
converges strongly to $P_\Om+t(P_\ch+\cdots P_{\ch^{\otimes^{m-1}}})$.
By virtue of Proposition \ref{uniquerg} the convergence of the sequence is actually in norm, which means the
projection $P_\Om+t(P_\ch+\cdots P_{\ch^{\otimes^{m-1}}})$ belongs to $\mathfrak{R}^{(m)}_t$.
In particular,  $P_\Om$ and $P_\ch+\cdots P_{\ch^{\otimes^{m-1}}}$ lie in  $\mathfrak{R}^{(m)}_t$.
The conclusion is then reached by induction on $m$ as was done in the proof of
Lemma \ref{wick}.

\end{proof}

\begin{prop}
For each $m\geq 1$, one has $\mathfrak{R}^{(m)}_t=\ga^{(m)}_t$.
\end{prop}

\begin{proof}
All we have to prove is that, for every $i\in\bz$, the operator $a_i^{(m)}$ sits in  $\mathfrak{R}^{(m)}_t$.
This is a straightforward consequence of Lemma \ref{proj} and the equality 
$$a_i^{(m)}=\sum_{k=0}^{m-1} P_{\ch^{\otimes^k}} x_i^{(m)} P_{\ch^{\otimes^{k+1}}}\, ,$$
which can be checked by direct computation.
\end{proof}
We end the section by showing that  $\ga^{(m)}_t$ is an irreducible
$C^*$-subalgebra of $\cb(\cf^{(m)}_t(\ch))$. This was already known in the full case, \cite{W}.

\begin{prop}\label{tirreducible}
For every $t>0$ and for every integer $m\geq 1$, the $C^*$-algebra  $\mathfrak{R}^{(m)}_t=\ga^{(m)}_t$ acts irreducibly on $\cf_t^{(m)}(\ch)$.
\end{prop}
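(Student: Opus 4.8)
The plan is to show that the commutant $(\ga^{(m)}_t)'$ inside $\cb(\cf_t^{(m)}(\ch))$ reduces to the scalars, which is exactly irreducibility. The two ingredients that make this immediate are that the vacuum $\Om$ is cyclic for $\ga^{(m)}_t$ and that the rank-one projection $P_\Om$ onto $\bc\Om$ already belongs to $\ga^{(m)}_t$, the latter being precisely the $h=0$ instance of Lemma \ref{wick}. Recall also that we have already identified $\mathfrak{R}^{(m)}_t=\ga^{(m)}_t$, so it suffices to argue for $\ga^{(m)}_t$.

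First I would establish the cyclicity of $\Om$. With $m$ fixed, write $a_i$ for $a^{(m)}_i$, so that the truncated creator $a^\dag_i=P_m a^\dag_i P_m$ lies in $\ga^{(m)}_t$. A direct computation based on \eqref{creator} shows that for every $k\leq m$ one has $a^\dag_{j_1}a^\dag_{j_2}\cdots a^\dag_{j_k}\Om=e_{j_1}\otimes\cdots\otimes e_{j_k}$: applying the creators from right to left, each partial product stays in a tensor power of degree at most $m$, so the truncation $P_m$ acts as the identity at every stage. As $k$ ranges over $0,1,\ldots,m$ and $j_1,\ldots,j_k$ over $\bz$, these vectors form an orthogonal basis of $\cf_t^{(m)}(\ch)$, so their linear span is dense. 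Hence $\{A\Om:A\in\ga^{(m)}_t\}$ is dense in $\cf_t^{(m)}(\ch)$ and $\Om$ is cyclic.

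With cyclicity in hand the conclusion follows. Let $T\in(\ga^{(m)}_t)'$. Since $P_\Om\in\ga^{(m)}_t$, the operator $T$ commutes with $P_\Om$, whence $T\Om=TP_\Om\Om=P_\Om T\Om\in\bc\Om$; write $T\Om=\lambda\Om$ with $\lambda=\langle T\Om,\Om\rangle$. Then for every $A\in\ga^{(m)}_t$ one gets $T(A\Om)=A(T\Om)=\lambda A\Om$, and since the vectors $A\Om$ are dense we conclude $T=\lambda I$. Therefore $(\ga^{(m)}_t)'=\bc I$, which is exactly the assertion that $\ga^{(m)}_t=\mathfrak{R}^{(m)}_t$ acts irreducibly on $\cf_t^{(m)}(\ch)$.

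There is essentially no hard step here; the argument is the standard ``cyclic vector together with its rank-one projection in the algebra implies irreducibility''. The only point deserving a little care is verifying that the truncation does not interfere with the repeated application of the creators in the cyclicity computation, i.e. that $a^\dag_{j_1}\cdots a^\dag_{j_k}\Om$ genuinely equals $e_{j_1}\otimes\cdots\otimes e_{j_k}$ and is not annihilated by $P_m$, which is guaranteed precisely by stopping at $k\leq m$.
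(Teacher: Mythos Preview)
Your proof is correct and follows essentially the same approach as the paper: show $(\ga^{(m)}_t)'=\bc I$ by using that $P_\Om\in\ga^{(m)}_t$ forces $T\Om=\lambda\Om$, and then invoke cyclicity of $\Om$ to conclude $T=\lambda I$. The only difference is that you spell out the cyclicity of $\Om$ and the passage from cyclicity to $T=\lambda I$ explicitly, whereas the paper simply invokes the standard fact that a cyclic vector for an algebra is separating for its commutant.
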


\begin{proof}
We will show that the commutant $(\ga^{(m)}_t)'$ is trivial. If $T\in\cb(\cf_t^{(m)}(\ch))$ is in  $(\ga^{(m)}_t)'$, then in particular
$TP_\Om=P_\Om T$, hence $T\Om=\lambda \Om$, for some $\lambda\in\bc$.
Since $\Om$ is cyclic for $\ga^{(m)}_t$, $\Om$ is separating for $(\ga^{(m)}_t)'$, which means $T=\lambda I$.
\end{proof}

\section*{Acknowledgments}
\noindent
We acknowledge  the support of Italian INDAM-GNAMPA.

\end{document}